\newtheorem{theorem}{Theorem}
\newtheorem{lemma}[theorem]{Lemma}
\newtheorem{corollary}[theorem]{Corollary}
\begin{document}

\title{A Minimal Lamination with Cantor Set-Like Singularities}

\author{Stephen J. Kleene}
\address{Department of Mathematics\\
MIT \\
77 N. Masachussetts Ave.  \\
Boston, MA 2139}

\email{skleene@math.mit.edu}

\maketitle

\begin{abstract}
Given a compact closed subset $M$ of a line segment in $\mathbb{R}^3$, we construct a sequence of  minimal surfaces $\Sigma_k$ embedded in a neighborhood $C$ of the line segment that converge smoothly to a limit lamination of $C$ away from $M$. Moreover, the curvature of this sequence blows up precisely on $M$, and the limit lamination has non-removable singularities precisely on the boundary of $M$.
\end {abstract}
\section{introduction}
Let $\Sigma_k \subset B_{R_k}  = B_{R_k} (0) \subset \mathbb{R}^3$ be a sequence of compact embedded minimal surfaces with $\partial \Sigma_k \subset \partial B_{R_k}$ and curvature blowing up at the origin. In \cite{CM1}, Colding and Minicozzi showed that when $R_k \rightarrow \infty$, a subsequence  converges off a Lipshitz curve to a foliation by parallel planes. In particular, the limit is a smooth, proper foliation.  By contrast, in \cite{CM2} Colding and Minicozzi constructed  a sequence as above with $R_k$ uniformly bounded and converging to a limit lamination of the unit ball with a non-removable singularity at the origin. Later, B. Dean in \cite{BD} found a similar example where the limit lamination has a finite set of  singularities along a line segment, and S. Khan in \cite{SK} found a limit lamination consisting of a non-properly embedded minimal disk in the upper half ball spiraling into a foliation by parallel planes of the lower half ball.  Both Dean and Khan used methods that are analogous to those in \cite{CM1}. Recently, using a variational method, D. Hoffman and B. White in \cite{HW} were able to construct a sequence converging to a non-proper limit lamination and with curvature blowup occurring along an arbitrary compact subset of a line segment. In this paper we do the same, but with a method that is derivative of that in \cite{CM1} and \cite{SK}. The main theorem is: 

\begin{theorem}  \label{main_theorem}
Let $M$ be a compact subset of $\{ x_1 = x_2 = 0, |x_3| \leq 1/2 \}$ and let $C = \{ x_1^2  + x_2^2 \leq 1, |x_3| \leq 1/2 \}$. Then there is a sequence of properly embedded  minimal disks $\Sigma_k \subset C$ with $\partial \Sigma_k \subset \partial C$ and containing the vertical segment $\{ (0, 0, t)| |t| \leq 1/2\}$ so that:

\begin{enumerate} 
\item [(A)] $\lim_{k \rightarrow \infty} |A_{\Sigma_k}|^2 (p) = \infty$ for all $p \in M$. \label{curvature_blowup}
\item [(B)]For any $\delta > 0$ it holds $\sup_k \sup_{\Sigma_k \setminus M_\delta} |A_{\Sigma_k}|^2 < \infty$ where $M_\delta =  \cup_{p \in M} B_\delta(p)$. \label{curvature_stays_bounded}
\item [(C)]$\Sigma_k \setminus \{ x_3-\text{axis}\} = \Sigma_{1,k} \cup \Sigma_{2, k}$ for multi-vauled graphs $\Sigma_{1,k}$, $\Sigma_{2,k}$. \label{m_valued_graphs}
\item[(D)] For each interval $ I = (t_1, t_2)$ of the compliment of $M$ in the $x_3$-axis, $\Sigma_k \cap \{ t_1 <x_3 < t_2\}$ converges to an imbedded minimal disk $\Sigma_I$ with $\bar{\Sigma_I} \setminus \Sigma_I = C \cap \{ x_3 = t_1, t_2\}$. Moreover, $\Sigma_I \setminus \{ x_3-\text{axis}\} = \Sigma_{1,I} \cup \Sigma_{2,I}$, for $\infty$-valued graphs $\Sigma_{1,I}$ and $\Sigma_{2, I}$ each of which spirals in to the planes $\{x_3 = t_1\}$ from above and $\{ x_3 = t_2\}$ from below.  \label {limit_lamination}
\end{enumerate}
\end{theorem}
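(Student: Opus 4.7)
The plan is to construct each $\Sigma_k$ as the Plateau solution to a carefully designed Jordan curve $\gamma_k \subset \partial C$, in analogy with \cite{CM2} and \cite{SK}. I would fix a sequence $\{p_j\}_{j=1}^\infty$ dense in $M$ and, for each $k$, a finite approximation $M_k = \{p_1, \dots, p_{n_k}\}$ with $n_k \to \infty$ and $M_k$ becoming Hausdorff-dense in $M$. The curve $\gamma_k$ would be built as two ``seam'' arcs on the side of $\partial C$ running from bottom to top, each parameterized as a near-helix with variable vertical pitch $h_k(x_3)$: the pitch is of order $1/k$ inside $1/k$-neighborhoods of each $p_i \in M_k$ and is bounded below by a fixed positive constant outside these neighborhoods. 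The seams, joined by short horizontal arcs at $x_3 = \pm 1/2$, bound a disk in $\mathbb{R}^3$. Choosing $\gamma_k$ invariant under the half-turn $(x_1, x_2, x_3) \mapsto (-x_1, -x_2, x_3)$ and applying the usual reflection/symmetry argument forces the Plateau disk $\Sigma_k$ to contain the $x_3$-axis segment.

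The multi-valued graph decomposition (C) would follow from a Rado-type argument: since $\gamma_k$ meets each horizontal plane in exactly two points (one per seam), the interior of $\Sigma_k$ meets each plane $\{x_3 = t\}$ in a smooth curve through the axis, and $\Sigma_k \setminus \{x_3\text{-axis}\}$ splits into two multi-valued graphs $\Sigma_{1,k}, \Sigma_{2,k}$. Property (A) arises because near each $p_i \in M_k$ the boundary completes a growing number of turns in a slab of shrinking vertical extent, which forces a lower bound on $|A_{\Sigma_k}|^2(p_i)$ that grows in $k$; quantifying this uniformly and invoking density of $\bigcup_k M_k$ in $M$ extends the blowup to every $p \in M$.

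For (B), on compact subsets of $C \setminus M_\delta$ the boundary pitch $h_k$ is bounded below by a constant depending only on $\delta$, so the local picture is close to a finite stack of parallel planes, and the one-sided curvature estimate of \cite{CM1} applies to give a uniform bound on $|A_{\Sigma_k}|^2$. For (D), on an interval $I = (t_1, t_2)$ of the complement of $M$, property (B) permits extraction of a subsequential smooth limit $\Sigma_I$ on every compact subset of $\{t_1 + \epsilon < x_3 < t_2 - \epsilon\}$. Standard minimal lamination compactness and removable singularities then identify $\Sigma_I$ as a properly embedded minimal disk in $C \cap \{t_1 < x_3 < t_2\}$. Because $t_1, t_2 \in M$, the pitches $h_k$ tend to zero as $x_3$ approaches $t_1$ or $t_2$, so the limit produces the $\infty$-valued spiral graphs $\Sigma_{1, I}, \Sigma_{2, I}$ accumulating on $\{x_3 = t_1\}$ and $\{x_3 = t_2\}$ as claimed, and $\overline{\Sigma_I} \setminus \Sigma_I = C \cap \{x_3 = t_1, t_2\}$.

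The principal obstacle is achieving (A) and (B) simultaneously: the pitch function $h_k$ and the approximations $M_k$ must be engineered so that the Plateau disks develop unbounded curvature at every point of $M$ while admitting uniform interior estimates on the complement. The delicacy is twofold. First, the $M_k$ must become dense in $M$ fast enough that even non-isolated points of $M$ inherit the pitch degeneration in the limit, yet not so fast that neighboring tight spots interfere and create spurious blowup off $M$. Second, the one-sided curvature estimate of \cite{CM1} must be applied in a way that is stable as $k \to \infty$, using only the lower bound on $h_k$ off $M$ and not the specific placement of the points $p_i$, so that the same extracted subsequence realizes the limit lamination on every component interval $I$.
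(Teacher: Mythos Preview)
Your proposal takes a genuinely different route from the paper. The paper constructs each $\Sigma_k$ \emph{explicitly} via the Weierstrass representation: it takes $\phi=dz$ and $g=e^{iH_k}$ with
\[
H_k(z)=\sum_{l=0}^k \mu^{-l}\sum_{p\in m_l}\int_0^{z-p}\frac{dw}{(w^2+a_k^2)^2},
\]
where the finite sets $m_l\subset M$ are built so that $M_k=\bigcup_{l\le k}m_l$ is $\gamma^{-k}$-dense in $M$. The domain $\Omega_k$ is an explicit strip $\{|y|\le Y_k(x)\}$, and all of (A)--(D) are read off from direct estimates on $U_k=\operatorname{Re}H_k$ and $V_k=\operatorname{Im}H_k$: embeddedness comes from bounding $|U_k(x,Y_k(x))-U_k(x,0)|$, the curvature formula $K=-|\partial_z H_k|^2/\cosh^4 V_k$ gives (A) and (B) by inspection, and the spiraling in (D) is obtained by showing $U_k(t,0)$ increases without bound as $t\to t_1^+$. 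No Plateau problem, no Colding--Minicozzi estimates, and no compactness argument beyond Montel's theorem is used.

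Your plan is essentially the variational method of Hoffman--White \cite{HW}, not that of \cite{CM2} or \cite{SK}; both of those papers use Weierstrass data, as the present paper does. The Plateau/Rado approach you outline is viable and is exactly what \cite{HW} carries out, so in principle it can prove the theorem. What it buys is flexibility (one need not guess holomorphic data), at the cost of relying on substantially heavier machinery: existence and regularity of Plateau solutions, a symmetry argument to force the axis into the disk, and nontrivial curvature estimates. What the paper's Weierstrass approach buys is that every quantity is computable, so (A), (B), and the spiral counts in (D) are obtained by elementary calculus rather than by appeal to \cite{CM1}.

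Two points in your sketch would require real work if pursued. First, the step ``tight pitch at the boundary forces $|A|^2(p_i)\to\infty$ on the axis'' is not automatic: many boundary turns do not by themselves pin curvature at a specific interior point, and Hoffman--White handle this with a separate argument. Second, the one-sided curvature estimate of \cite{CM1} does not apply in the form you invoke, since $\Sigma_k$ is not on one side of any plane; the uniform bound in (B) must come instead from the local graphical structure and stable/Heinz-type estimates, as in \cite{HW}. These are not fatal, but they are precisely the ``principal obstacle'' you flag, and they are where the variational route becomes technical.
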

It follows from (D) that a subsequence of the $\Sigma_k \setminus M$ converge to a limit lamination of $C \setminus M$. The leaves of this lamination are given by the multi-valued graphs $\Sigma_I$ given in (D), indexed by intervals $I$ of the complement of $M$, taken together with the planes $\{ x_3 = t\} \cap C$ for $(0,0,t) \in M$. This lamination does not extend to a lamination of $C$, however, as every boundary point of $M$ is a non-removable singularity. Theorem \ref{main_theorem} is inspired by the result of Hoffman and White in \cite{HW}. The author would like to thank Professor William P. Minicozzi who suggested to him the problem of constructing an example with singularities on the  Cantor set (a special case of Theorem \ref{main_theorem})

Throughout we will use coordinates $(x_1, x_2, x_3)$ for vectors in $\mathbb{R}^3$, and $z = x + iy$ on $\mathbb{C}$. For $p \in \mathbb{R}^3$, and $s > 0$, the ball in $\mathbb{R}^3$ is $B_s(p)$.  We denote the sectional curvature of a smooth surface $\Sigma$ by $K_\Sigma$. When $\Sigma$ is immersed in $\mathbb{R}^3$, $A_\Sigma$ will be its second fundamental form. In particular, for $\Sigma$ minimal we have that $|A_\Sigma|^2 = -2K_\Sigma$. Also, we will  identify the set $M \subset \{ x_3 \text{-axis} \}$ with the corresponding subset of $\mathbb{R} \subset \mathbb{C}$; that is, the notation will not reflect the distinction, but will be clear from context. Our example will rely heavily on the Weierstrass Representation, which we introduce here.

\section{The Weierstrass representation}

Given a domain $\Omega \subset \mathbb{C}$, a meromorphic function $g$ on $\Omega$ and a holomorphic one-form $\phi$ on $\Omega$, one obtains a (branched) conformal minimal immersion $F: \Omega \rightarrow \mathbb{R}^3$, given by (c.f. \cite{Os})

\begin{equation} \label{W_R}
F(z) = Re \left\{ \int_{\zeta \in  \gamma_{z_0, z}} \left( \frac{1}{2}\left(g^{-1}(\zeta) - g(\zeta) \right), \frac{i}{2} \left( g^{-1}(\zeta) + g(\zeta)\right), 1\right) \phi (\zeta) \right\}
\end{equation}
the so-called Weierstrass representation associated to $\Omega, g, \phi$. The triple ($\Omega$, $g$, $\phi$) is referred to as the Weierstrass data of the immersion $F$. Here, $\gamma_{z_0, z}$ is a path in $\Omega$ connecting $z_0$ and $z$. By requiring that the domain $\Omega$ be simply connected, and that $g$ be a non-vanishing holomorphic function, we can ensure that $F(z)$ does not depend on the choice of path from $z_0$  to $z$, and that $dF \neq 0$. Changing the base point $z_0$ has the effect of translating the immersion by a fixed vector in $\mathbb{R}^3$.

The unit normal $\bold{n}$ and the Gauss curvature $K$ of the resulting surface are then (see sections 8, 9 in \cite{Os})

\begin{eqnarray}
\bold{n} & = & \left( 2 \text{Re} \, g, 2 \text{Im} \, g, |g|^2 - 1 \right)/ \left( |g|^2 + 1\right), \label{unit_normal} \\
K & = & - \left[ \frac{4 |\partial_z g| |g|}{|\phi|(1 + |g|^2)^2}\right]^2. \label{gauss_curvature}
\end{eqnarray}
Since the pullback $F^*(dx_3)$ is $\text{Re} \,  \phi$, $\phi$ is usually called the \textit{height differential}. The two standard examples are 

\begin{equation}
g(z) = z , \phi(z) = dz/z, \Omega = \mathbb{C} \setminus \{0\} 
\end{equation}
giving a catenoid, and 

\begin{equation}
g(z) = e^{iz}, \phi(z) = dz, \Omega = \mathbb{C}
\end{equation}
giving a helicoid.
\begin{figure}[htbp]
\centering
\begin{minipage}[b]{1 \textwidth}
\begin{center}
\includegraphics[width = .8 \textwidth]{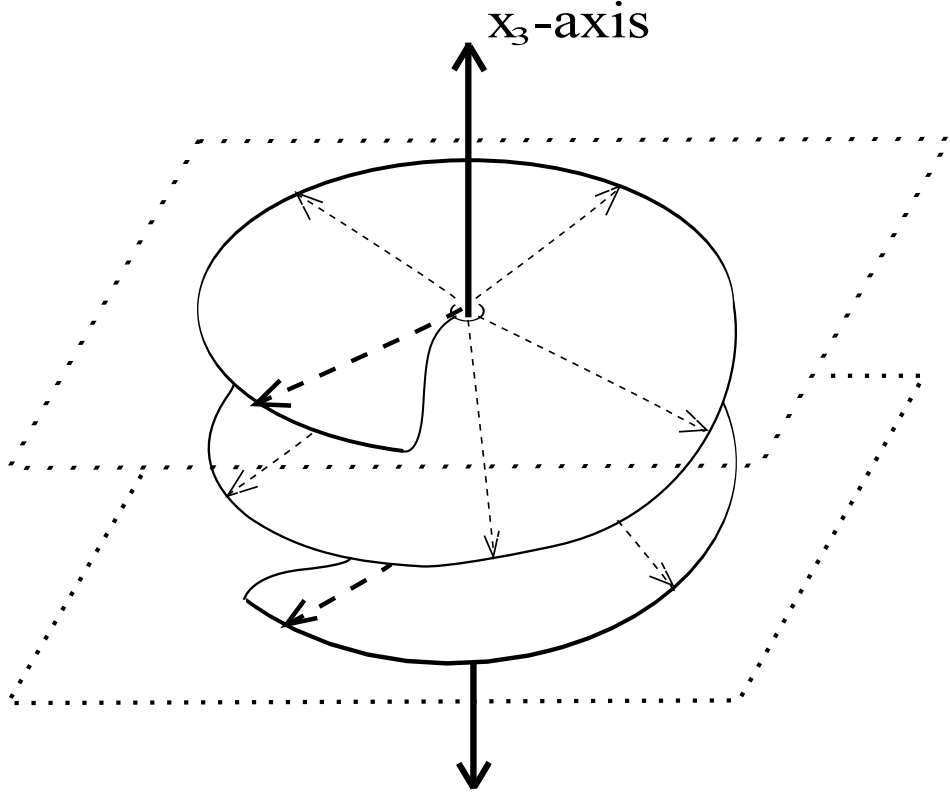}
\caption{A schematic of a helicoid like apparently 3-valued graph  spiraling between two horizontal planes. (Image due to Siddique Khan)}
\end{center}
\end{minipage}
\end{figure}

  We will always write our non-vanishing holomorphic function $g$ in the form $g = e^{ih}$, for a potentially vanishing holomorphic function $h$, and we will always take $\phi = dz$. For such Weierstrass data, the differential $dF$ may be expressed as 

\begin{eqnarray} \label{d_xF}
\partial_x F & = &\left( \text{sinh} \, v \, \text{cos}  \, u, \text{sinh}  \, v \text{sin}\, u, 1\right), \\
\partial_y F & = & \left( \text{cosh} \, v  \, \text{sin} \, u, - \text{cosh} \, v \, \text{cos} \, u, 0\right).\label{d_yF}
\end{eqnarray}

\section{An outline of the proof}
Fix a compact subset $M$ of the real line. We will be dealing with a family of immersions $F_{k,a}: \Omega_{k, a} \rightarrow \mathbb{R}^3$ that depend on a parameter $ 0 < a < 1/2$ given by Weierstrass data of the form $\Omega_{k,a}, G_{k,a} = e^{iH_{k,a}},  \phi = dz$, and a  sequence $M_k \subset M$ that converge to a dense subset of $M$. Each function $H_{k,a}$ will be real valued when restricted to the real line in $\mathbb{C}$. That is, writing $H_{k,a} = U_{k,a} + iV_{k,a}$ for real valued functions $U_{k,a}, V_{k,a}: \Omega_k \rightarrow \mathbb{R}$, we have that $H_{k,a}(x, 0) = U_{k,a}(x,0)$. Moreover, we will show that $V_{k,a} (x,y) > 0 $ when $y > 0$. A look at  the expression for the unit normal given above in (\ref{unit_normal}) then shows that all of the surfaces $\Sigma_{k,a} := F_{k, a}(\Omega_{k, a})$ will be multi-valued graphs over the $(x_1,x_2)$ plane away from the $x_3$-axis (since $|g(x,y)| = 1$ is equivalent to $y = 0$). The dependence on the  parameter $0< a < 1/2$ will be such that $\text{lim}_{a \rightarrow 0} |A_{\Sigma_{k,a}}|^2(p) = \infty$ for all $p \in M_k$, and such that $|A_{\Sigma_{k,a}}|^2$ remains uniformly bounded in $k$ and $a$ away from $M$. We will then choose a suitable sequence $a_k  \rightarrow 0$, and set $F_k = F_{k, a_k}$, $\Omega_k = \Omega_{k, a_k}$, $G_k = G_{k, a_k}$, and $H_k = H_{k, a_k}$.  Immediately, (A), (B) and (C) of Theorem \ref{main_theorem} are satisfied by the diagonal subsequence.  In fact,  we will show that any suitable sequence is  a sequence $a_k \rightarrow 0$ satisfying $a_k < \gamma^{-k}$ for a parameter $\gamma > 1$ which we introduce later.  The bulk of the work will go towards establishing (D). To this end, we will show that

\begin{lemma} \label{main_lemma} 

\begin{enumerate}
\item [(a)]The horizontal slice $\{ x_3 = t\} \cap F_k (\Omega_k)$ is the image of the vertical segment $\{ x = t\}$ in the plane, i.e., $ x_3 (F_k (t,y)) = t$. \label{line_to_plane}

\item [(b)]The image of $ F_k(\{ x = t\})$ is a graph over a line segment in the plane $\{ x_3 = t\} $(the line segment will depend on t) \label{graph}

\item [(c)]The boundary of the graph in (b) is outside the ball $B_{r_0}(F_k(t, 0))$ for some $r_0 > 0$. \label{long_enough}
\end{enumerate}
\end{lemma}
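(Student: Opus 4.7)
The plan is to exploit the explicit form of the Weierstrass data $\phi=dz$, $g_k=e^{iH_k}$, together with the identities (\ref{d_xF})--(\ref{d_yF}), to read off (a) immediately, and then reduce (b) and (c) to a single oscillation estimate on $U_k$ along vertical segments.

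For (a), the choice $\phi=dz$ gives, after normalizing $z_0=0$ (which by the remark following (\ref{W_R}) only translates the image),
\[
x_3\bigl(F_k(x+iy)\bigr) \;=\; \mathrm{Re}\!\int_0^{x+iy} d\zeta \;=\; x,
\]
so $x_3(F_k(t,y))=t$ for every $y$; equivalently, the third component of $\partial_y F_k$ vanishes by (\ref{d_yF}).

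For (b), (\ref{d_yF}) presents the tangent to $y\mapsto F_k(t,y)$ as
\[
\partial_y F_k(t,y) \;=\; \cosh V_k(t,y)\,\bigl(\sin U_k(t,y),\ -\cos U_k(t,y),\ 0\bigr),
\]
whose planar direction $(\sin U_k,-\cos U_k)=(\cos(U_k-\tfrac{\pi}{2}),\sin(U_k-\tfrac{\pi}{2}))$ is determined by the angle $U_k(t,y)-\pi/2$. Letting $\omega(t)$ denote the oscillation of $U_k(t,\cdot)$ on the vertical segment $\{x=t\}\cap\Omega_k$, the tangent direction sweeps out an arc of length $\omega(t)$ on the unit circle. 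Whenever $\omega(t)<\pi$ this arc lies in an open half-circle, so the projection of $F_k(t,\cdot)$ onto any direction $\vec w$ in the complementary open half-circle is strictly monotone, exhibiting $F_k(\{x=t\})$ as a graph over the line through $F_k(t,0)$ perpendicular to $\vec w$. The content of (b) is therefore the uniform oscillation bound $\omega(t)<\pi$; this is the step that will actually invoke the construction of $H_{k,a}$ carried out in the sections that follow, and will rely on the fact that $V_k$ (and hence, by Cauchy--Riemann, $\partial_y U_k=-\partial_x V_k$) is controllably small on the relevant portion of $\Omega_k$.

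For (c), suppose one has in fact strengthened (b) to $\omega(t)\leq \pi-2\delta_0$ for a uniform $\delta_0>0$. Choose $\vec w$ perpendicular to the midpoint direction of the tangent arc. Then
\[
\partial_y F_k(t,y)\cdot \vec w \;\geq\; \cosh V_k(t,y)\,\sin\delta_0 \;\geq\; \sin\delta_0,
\]
so
\[
|F_k(t,y)-F_k(t,0)| \;\geq\; \vec w\cdot\bigl(F_k(t,y)-F_k(t,0)\bigr) \;\geq\; |y|\sin\delta_0.
\]
Combined with a uniform lower bound $y_0>0$ on the vertical extent of $\Omega_k$ about the real axis (which will be built into the choice of $\Omega_{k,a}$ in the upcoming construction), this yields (c) with $r_0=y_0\sin\delta_0$.

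The main obstacle is thus the quantitative oscillation estimate $\omega(t)\leq \pi-2\delta_0$ that underlies both (b) and (c); once it is in hand, parts (a)--(c) fall out of (\ref{d_xF})--(\ref{d_yF}) and the choice $\phi=dz$. Everything else in the lemma is bookkeeping.
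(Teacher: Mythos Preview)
Your treatment of (a) and (b) matches the paper's: (a) is immediate from $\phi=dz$, and (b) follows from the oscillation bound on $U_k$ along vertical segments, which the paper obtains as Corollary~\ref{U_k_V_k_bounds} (in fact $|U_k(x,Y_k(x))-U_k(x,0)|\le \epsilon^2 c_1'<1$ for $\epsilon$ small), and then uses exactly your inner-product argument in (\ref{angle_bound}).

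The gap is in (c). Your argument for (c) assumes a \emph{uniform lower bound $y_0>0$ on the vertical extent of $\Omega_k$}, but no such bound exists: by construction $\Omega_k=\{|y|\le Y_k(x)\}$ with $Y_k(x)=\epsilon\bigl((x-p_k(x))^2+a_k^2\bigr)^{5/4}$, so at $x=p\in M_k$ one has $Y_k(p)=\epsilon\,a_k^{5/2}\to 0$ as $k\to\infty$. The domains pinch down to the real axis precisely at the points where you need the image to remain long. Consequently the estimate $|F_k(t,y)-F_k(t,0)|\ge |y|\sin\delta_0$, while correct, gives nothing at the boundary $y=\pm Y_k(t)$ near $M$.

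What the paper does instead is keep the $\cosh V_k$ factor you discarded. Integrating (\ref{angle_bound}) only over $[Y_k(x)/2,\,Y_k(x)]$ and using the lower bound on $V_k$ from Corollary~\ref{U_k_V_k_bounds} yields
\[
\bigl\langle F_k(x,Y_k(x))-F_k(x,0),\ \partial_y F_k(x,0)\bigr\rangle \;>\; \tfrac{1}{2}Y_k(x)\,e^{\min_{[Y_k/2,Y_k]} V_k}\;\ge\; \tfrac{\epsilon}{2}\,s_k(x)^{5/3}\,e^{\frac{1}{2}\epsilon c_2 q_k(x)}\;=:\;r_k(x),
\]
so the smallness of $Y_k$ is compensated by the exponential growth of $\cosh V_k$. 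The genuine work in (c) is then Lemmas~\ref{alpha_delta}--\ref{r_k_bound_lemma}, which show $r_k(x)$ is bounded below uniformly in $k$ via a recursive comparison $r_k(x)>c_k\,r_{k-1}(x)$ with $\prod c_l>0$. This balancing act between the shrinking domain and the growing $V_k$ is the heart of (c); the oscillation estimate you identify as ``the main obstacle'' is by comparison the easy part.
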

This gives that the immersions $F_k: \Omega_k \rightarrow \mathbb{R}^3$ are actually embeddings, and that the surfaces $\Sigma_k$ given by $F_k(\Omega_k)$ are all embedded in a fixed cylinder $C_{r_0} = \{ x_1^2 + x_2^2 \leq r_0^2, |x_3| < 1/2\}$ about the $x_3$-axis in $\mathbb{R}^3$. This will then imply that the surfaces $\Sigma_k$ converge smoothly on compact subsets of $C_{r_0} \setminus M$ to a limit lamination of $C_{r_0}$. The claimed structure of the limit lamination(that is, that on each interval of the complement it consists of two multi-valued graphs that spiral into planes from above and below) will be established at the end. 

\begin{figure}[htbp]
\centering
\begin{minipage}[b]{1 \textwidth}
\begin{center}
\includegraphics[width = .8 \textwidth]{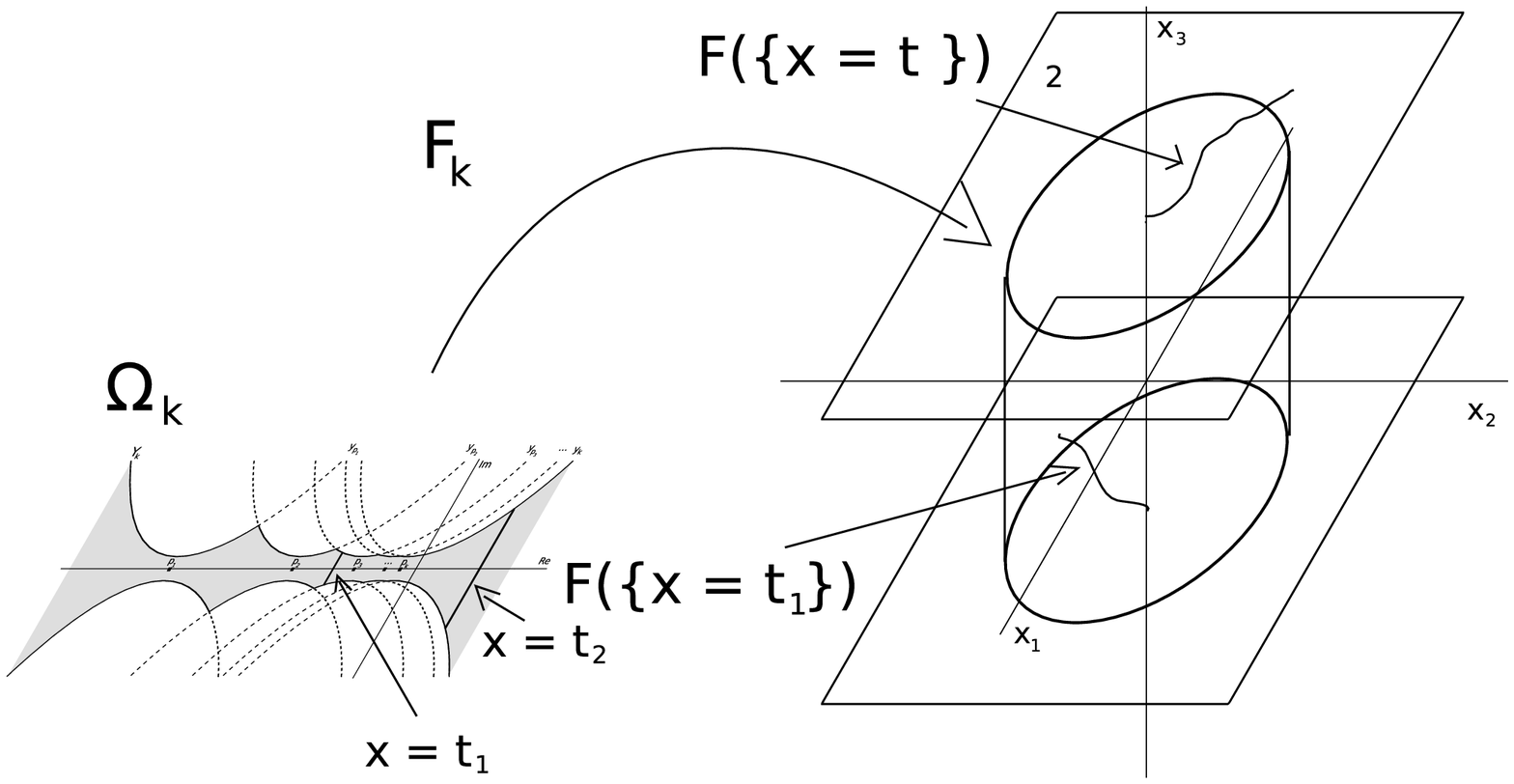}
\caption{The functions $F_k$ map vertical rays of the form $\{x = t\}$ contained in the domain $\Omega_k$ to planes perpendicular to the $x_3$-axis given by $\{ x_3 = t \}$. Note that this induces an identification of the closed set $M$, thought of as lying in the complex plane along the real axis, with its image in the $x_3$-axis.}
\end{center}
\end{minipage}
\end{figure}

Throughout the paper, all computations will be carried out and recorded only on the upper half plane in $\mathbb{C}$, as the corresponding computations on the lower half plane are completely analogous. By scaling it suffices to prove Theorem \ref{main_theorem} (D) for some $C_{r_0}$, not $C_1$ in particular.

\section{definitions}

Let $ M \subset [0,1]$ be a closed set. Fix $\gamma > 1$, and take $M_{-1} $ to be the empty set. Then for $k$ a non-negative integer, we inductively define two families of sets $m_k$, and $M_k$ as follows: Assuming  $M_{k-1}$ is already defined, take $m_k$ to be any maximal subset of $M$ with the property that, for $p,q \in m_k$,  $r \in M_{k-1}$, it holds that $|p - q|, |p - r| \geq \gamma^{-k}$. Then define $M_k$ = $M_{k - 1} \cup m_k$ and $M_\infty = \cup_k M_k$. Also, for $x \in \mathbb{R}$ define $p_k(x)$ to be the closest element in $M_k$ to $x$. Note that there are at most two such points, and we take $p_k(x)$ to be the closest point on the left, equivalently the smaller of the two points. For $p \in M_\infty$, we define $e(p)$ to be the unique natural number such that $p \in m_{e(p)}$. Note that $e(p_k(x)) \leq k$. We take

\begin{equation} \label{h_def}
h_a(z) = \int_0^z \frac{dz}{\left( z^2 + a^2 \right)^2} = u_a(z) + iv_a(z)
\end{equation}
and
\begin{equation} \notag
y_{0,a}(x) = \epsilon   \left(x^2 + a^2 \right)^{5/4}
\end{equation}
for $\epsilon$ to be determined. For $p \in \mathbb{R}$ we define
\begin{equation} \notag
h_{p,a}(z) = h_a(z - p) = u_{p,a}(z) + iv_{p,a}(z)
\end{equation}
and 
\begin{equation} \notag
y_{p,a}(x) = y_{0,a}(x - p).
\end{equation} 
We then take
\begin{equation} \label{h_l}
h_{l,a}(z)  = \sum_{p \in m_l} h_{p,a}(z) = u_{l,a}(z) + iv_{l,a}(z)
\end{equation}
and
\begin{equation} \notag
y_{l,a}(x) = \min_{p \in m_l} y_{p,a}(x).
\end{equation}
We take
\begin{equation} \label{H_k}
H_k(z) = \sum_{l = 0}^k \mu^{-l} h_{l, a_k}(z) = U_k(z) + iV_k(z)
\end{equation}
for a parameter $\mu > \gamma$ to be determined. We take 
\begin{equation} \notag
Y_k(x)  = \min_{l \leq k} y_{l, a_k}(x).
\end{equation}

\begin{figure}[h]
\begin{minipage}[b]{1 \textwidth}
\begin{center}
\includegraphics[width = .8 \textwidth]{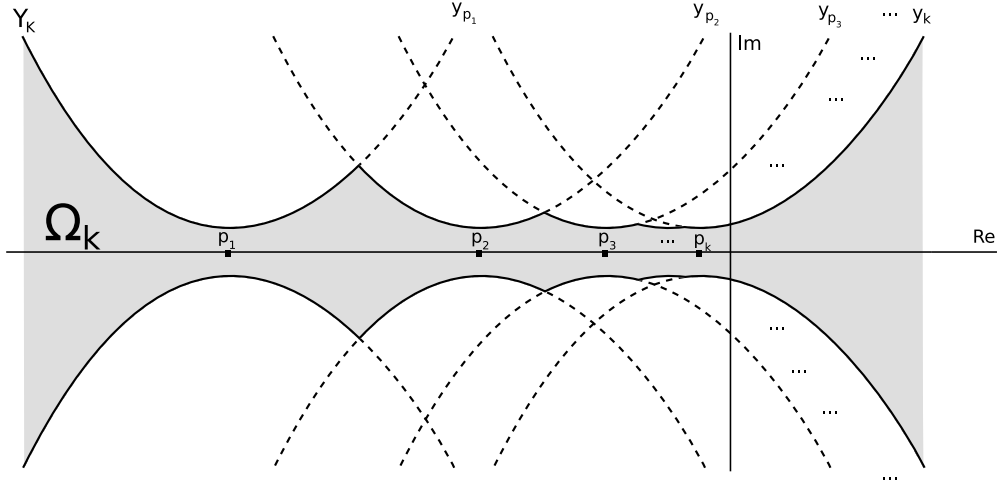}
\caption{A schematic rendering of the domain $\Omega_k$ in the case of   $M = \{ p_l = - 2^{-l} | l \in \mathbb{N} \}$. The solid line indicates the function $Y_k(x)$, and the shaded region indicates the domain $\Omega_k$ itself. Note that in this case, the sets $m_l = \{ p_l\}$ consist of a single point}
\end{center}
\end{minipage}
\end{figure}
We take
\begin{equation} \notag
\omega_a = \left\{ x + iy | |y| \leq y_{0,a}(x) \right\}, \omega_{p,a} =  \left\{ x + iy | |y| \leq y_{p,a}(x) \right\}
\end{equation}
and 
\begin{equation} \notag
\omega_{l,a} =  \left\{ x + iy | |y| \leq y_{l,a}(x) \right\}, \Omega_k =  \left\{ x + iy | |y| \leq Y_k(x) \right\}
\end{equation}
and lastly set $\Omega_\infty = \cap_k \Omega_k$.

Note that in the above definitions, objects bearing the subscript ``$k$'' (as opposed to ``$l$'') always enumerate an (as yet undetermined) diagonal sequence. Consequently, the dependence on the parameter $a$ is omitted from the notation. At times, the dependence on $a$ will be suppressed from the notation for objects without the subscript ``$k$''. Also, note that  for each $x$ we have that  $Y_k(x) = y_{p_k, a_k}(x)$. Again, when it is clear, the subscript ``$a_k$'' will be suppressed. Keep in mind throughout that $\{ a_k \}$ will always denote a sequence with $a_k \leq \gamma^{-k}$. Also, the parameters $\gamma$ and $\mu$ introduced in this section must satisfy $\mu^{2/3} < \gamma < \mu < \gamma^3$. The reasons are technical, and should become clear later in the paper. 
\section{Preliminary Results} \label{preliminary_results}

We record some elementary  properties of the sets $M_k$ and $m_k$ defined above which will be needed later.

\begin{lemma} 
$|m_k| \leq \gamma^k + 1$
\end{lemma}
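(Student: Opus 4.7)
The plan is to prove this by a standard packing argument, using only the fact that points of $m_k$ are pairwise $\gamma^{-k}$-separated and the ambient set $M$ lies in an interval of length $1$.

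First I would enumerate $m_k$ in increasing order as $p_1 < p_2 < \cdots < p_N$, where $N = |m_k|$. By the defining property of $m_k$, the pairwise separation condition gives $p_{i+1} - p_i \geq \gamma^{-k}$ for each $i = 1, \ldots, N-1$. Summing these telescoping inequalities yields
\begin{equation} \notag
p_N - p_1 \geq (N-1)\gamma^{-k}.
\end{equation}

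Since $m_k \subset M \subset [0,1]$, we have $p_N - p_1 \leq 1$. Combining the two inequalities gives $(N-1)\gamma^{-k} \leq 1$, i.e., $N \leq \gamma^k + 1$, which is the claim. Note that this argument uses only the mutual separation of the points of $m_k$; the stronger condition that they are also separated from $M_{k-1}$ is not needed here (it will presumably be exploited elsewhere). There is no real obstacle: the statement is purely a counting bound for a $\gamma^{-k}$-separated subset of a unit interval, and the proof is a one-line pigeonhole.
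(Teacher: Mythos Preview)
Your proof is correct and follows essentially the same approach as the paper: enumerate the points of $m_k$ in increasing order, use the $\gamma^{-k}$ pairwise separation to bound the telescoping sum from below, and combine with the bound $p_N - p_1 \leq 1$ coming from $M \subset [0,1]$.
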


\begin{proof}
Let $p_1 < \ldots < p_n$ be $n$ distinct elements of $m_k$, ordered least to greatest. By construction we have that $p_{k + 1} - p_k \geq \gamma^{-k}$. Also, since $p_1, p_n \in M$ we get

\begin{equation} \notag
1  \geq p_n - p_1 = \sum_{k = 1}^{n - 1} p_{k + 1} - p_k \geq (n-1)\gamma^{-k}
\end{equation} 
\end{proof} 

\begin{lemma}
For all $p$ in M, there is  a $q $ in  $M_k$ such that $|p - q| < \gamma^{-k}$
\end{lemma}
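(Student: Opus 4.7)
The plan is to prove this by contradiction, exploiting the maximality in the definition of $m_k$. Suppose for some $p \in M$ we had $|p - q| \geq \gamma^{-k}$ for every $q \in M_k$. I would first dispose of the trivial cases: if $p \in M_k$ we may take $q = p$ and the distance is $0 < \gamma^{-k}$, so we may assume $p \notin M_k$, hence in particular $p \notin m_k$ and $p \notin M_{k-1}$.

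Next I would form the set $m_k' = m_k \cup \{p\} \subset M$ and verify it satisfies the separation condition defining $m_k$. For any two points $q, q' \in m_k$ the condition $|q - q'| \geq \gamma^{-k}$ is inherited from $m_k$; for any $q \in m_k \subset M_k$ the inequality $|p - q| \geq \gamma^{-k}$ holds by our contradictory assumption; and for any $r \in M_{k-1} \subset M_k$ we again get $|p - r| \geq \gamma^{-k}$ by the same assumption. Since $p \notin m_k$, the containment $m_k \subsetneq m_k'$ is strict, contradicting the maximality of $m_k$ among subsets of $M$ with this separation property. This forces the existence of $q \in M_k$ with $|p-q| < \gamma^{-k}$.

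No step here should present real difficulty; the argument is entirely set-theoretic and exploits the definition of $m_k$ directly. The only mild care needed is checking that the set $m_k' = m_k \cup \{p\}$ is still a subset of $M$ (immediate since $p \in M$ by hypothesis) and strictly larger than $m_k$ (immediate from $p \notin M_k \supset m_k$), so the maximality of $m_k$ is genuinely violated.
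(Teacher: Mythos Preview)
Your argument is correct and is exactly the paper's approach: the paper's proof is the single line ``If not, $m_k$ is not maximal,'' and you have simply unpacked this by exhibiting the larger admissible set $m_k \cup \{p\}$ and verifying the separation conditions. There is nothing to add.
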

\begin{proof}
If not, $m_k$ is not maximal.
\end{proof}

\begin{lemma}
The union $\cup_{k = 0}^{\infty} m_k = \cup_{k = 0}^{\infty} M_k \equiv M^\infty$ is a dense subset of $M$
\end{lemma}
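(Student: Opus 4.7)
The plan is to split the statement into two assertions: the set-theoretic equality $\cup_k m_k = \cup_k M_k$, and the density of this common set in $M$. Both are quick consequences of the inductive definition of $M_k$ and the preceding lemma.

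For the equality, I would just unwind the recursion $M_k = M_{k-1} \cup m_k$ with $M_{-1} = \emptyset$, which gives $M_k = \cup_{l \leq k} m_l$; taking the union over $k$ yields $\cup_k M_k = \cup_{k,l} m_l = \cup_l m_l$. This is a one-line observation.

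For density, I would fix $p \in M$ and $\varepsilon > 0$, and choose $k$ large enough that $\gamma^{-k} < \varepsilon$ (possible since $\gamma > 1$). Applying the preceding lemma (``for all $p \in M$ there is $q \in M_k$ with $|p - q| < \gamma^{-k}$''), we obtain a point $q \in M_k \subset M^\infty$ with $|p - q| < \varepsilon$. Since $\varepsilon$ was arbitrary, $p \in \overline{M^\infty}$, proving density.

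There is no real obstacle here; the lemma is a bookkeeping consequence of the previous two results and the definitions, and the only thing to be careful about is noting that $\gamma > 1$ so that the approximation scale $\gamma^{-k}$ actually tends to zero. I would write the proof as two short paragraphs corresponding to the two assertions above.
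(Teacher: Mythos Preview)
Your proposal is correct. The set-theoretic equality is immediate from the recursion, and your density argument is valid. The paper's own proof is essentially the same idea phrased contrapositively: instead of invoking the preceding lemma, it argues that if some $q\in M$ stayed at distance $>\gamma^{-k}$ from all of $M^\infty$ then $m_k$ would not be maximal. Since the preceding lemma is itself proved by exactly that maximality observation, the two arguments are interchangeable.
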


\begin{proof}
Suppose not. Then there is a $q \in M$, and a positive integer $k$ such that  $|p -q| > \gamma^{-k}$, $\forall p \in M^\infty$. In particular, this implies that $m_k$ is not maximal. 
\end{proof}

In order to avoid disrupting the narrative, the proofs of the remaining results in this section will be recorded later in the Appendix at the end. The proofs are somewhat tedious, though easily verified.
\begin{lemma} \label{holomorphic_on_domain}. 

For $\epsilon$ sufficiently small,  $h_p(z)$ is holomorphic on $\omega_p$, $h_l$ is holomorphic on $\omega_l$, and $H_k$ is holomorphic on $\Omega_k$.
\end{lemma}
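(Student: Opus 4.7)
The plan is to reduce the three claims to the single statement that $h_a$ is holomorphic on $\omega_a$ for small $\epsilon$, after which the remaining content is formal manipulation of finite intersections and finite sums. Concretely, the identity $h_{p,a}(z) = h_a(z-p)$ together with $\omega_{p,a} = p + \omega_a$ reduces the first claim to a translated statement about $h_a$ on $\omega_a$; the defining formulas for $y_{l,a}$ and $Y_k$ as pointwise minima give $\omega_{l,a} = \bigcap_{p \in m_l} \omega_{p,a}$ and $\Omega_k = \bigcap_{l \leq k} \omega_{l, a_k}$; and the earlier bound $|m_l| \leq \gamma^l + 1$ guarantees every sum in sight is finite.

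To establish the base case, I would verify two things about $h_a$ on $\omega_a$. First, that $\omega_a$ (or a slight open thickening, if one is being pedantic about $\leq$ versus $<$) is a simply connected neighborhood of $0$: each vertical slice $\{(x,y) : |y| \leq y_{0,a}(x)\}$ is a nondegenerate interval containing $0$, so the straight-line homotopy $(x,y) \mapsto (x, ty)$ retracts $\omega_a$ onto the real axis. Second, that the integrand $(\zeta^2 + a^2)^{-2}$ has no singularity in $\omega_a$: its only poles are $\pm ia$, both with real part zero, so it suffices to check $y_{0,a}(0) = \epsilon a^{5/2} < a$, i.e.\ $\epsilon < a^{-3/2}$. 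Since $a \leq \gamma^{-k} \leq 1$, any $\epsilon < 1$ makes this uniform in $a$. Cauchy's theorem on the resulting simply connected pole-free domain makes the primitive $h_a$ in (\ref{h_def}) path-independent and holomorphic.

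The remaining two assertions then drop out of the intersection structure and the finiteness of the index sets. Each summand $h_{p, a_k}$ of $h_{l, a_k}$ in (\ref{h_l}) is holomorphic on $\omega_{l, a_k} \subseteq \omega_{p, a_k}$, and $|m_l| < \infty$, so $h_{l, a_k}$ is holomorphic on $\omega_{l, a_k}$. The same argument applied with $\Omega_k = \bigcap_{l \leq k} \omega_{l, a_k}$ and (\ref{H_k}) handles $H_k$. The only real obstacle is the pole-avoidance step in the base case, since it is the sole place where the smallness of $\epsilon$ enters; but it reduces to the single explicit inequality $\epsilon a^{5/2} < a$, and no genuine analytic subtlety (convergence at infinity, pinching off of the domain, accumulation of poles) arises, because every sum is finite and every slice of every set involved is a genuine interval around the real axis.
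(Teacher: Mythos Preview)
Your proposal is correct and follows the same approach as the paper: reduce to the single base case $h_a$ on $\omega_a$, verify that $\omega_a$ is simply connected and that the poles $\pm ia$ lie outside it (the paper just says this is ``clear'' and ``obvious''; you supply the explicit inequality $\epsilon a^{5/2} < a$ and the retraction), and then let the finite-sum and nested-intersection structure carry the other two statements. The only difference is that you have written out details the paper leaves to the reader.
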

We will also need the following estimates:
\begin{lemma} \label{uv_bounds}
 On the domain $\omega_p$ it holds that:
\begin{equation} \notag
\left|\frac{\partial}{\partial y} u_p(x,y)\right| \leq \frac{c_1|x- p||y|}{((x-p)^2 + a^2)^3}
\end{equation}
and
\begin{equation} \notag
\frac{\partial}{\partial y} v_p(x,y) > \frac{c_2}{((x -p)^2 + a^2)^2}.
\end{equation}
\end{lemma}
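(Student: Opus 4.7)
The plan is to reduce to $p=0$ by translation, then express both partial derivatives in terms of the explicit holomorphic derivative $h'(z) = 1/(z^2+a^2)^2$ via the Cauchy–Riemann equations, and finally extract the claimed bounds using the fact that $|y|$ is very small compared to $\sqrt{x^2+a^2}$ on $\omega_0$.

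Concretely, since $h_p$ is holomorphic on $\omega_p$ (Lemma \ref{holomorphic_on_domain}) with $h_p'(z) = 1/((z-p)^2+a^2)^2$, the Cauchy–Riemann equations give $\partial_y u_p = -\mathrm{Im}\, h_p'$ and $\partial_y v_p = \mathrm{Re}\, h_p'$. Writing $w := z - p = x' + iy$ with $x' = x - p$, set $A = (x')^2 - y^2 + a^2$ and $B = 2 x' y$, so that $w^2 + a^2 = A + iB$ and
\begin{equation} \notag
h_p'(z) = \frac{1}{(A+iB)^2} = \frac{(A^2 - B^2) - 2iAB}{(A^2 + B^2)^2}.
\end{equation}
Thus $\partial_y u_p = 2AB/(A^2+B^2)^2$ and $\partial_y v_p = (A^2-B^2)/(A^2+B^2)^2$, both being completely explicit.

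The next step is to exploit the $5/4$ exponent in the definition of $\omega_p$: on $\omega_p$ we have $y^2 \leq \epsilon^2\bigl((x')^2+a^2\bigr)^{5/2}$, and since $x'$ ranges over a bounded interval, $\bigl((x')^2+a^2\bigr)^{3/2} \leq C$; therefore $y^2 \leq C \epsilon^2 \bigl((x')^2+a^2\bigr)$. Choosing $\epsilon$ small makes this a genuine perturbation: one gets $A \geq \tfrac{1}{2}\bigl((x')^2+a^2\bigr)$, $|A| \leq \tfrac{3}{2}\bigl((x')^2+a^2\bigr)$, and $B^2 = 4(x')^2 y^2 \leq C\epsilon^2 \bigl((x')^2+a^2\bigr)^2$. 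Combining these gives the two–sided comparison
\begin{equation} \notag
c \bigl((x')^2 + a^2\bigr)^2 \leq A^2 + B^2 \leq C \bigl((x')^2 + a^2\bigr)^2.
\end{equation}

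Plugging into the explicit formulas now finishes the job. For the first estimate,
\begin{equation} \notag
|\partial_y u_p| = \frac{2|A|\,|B|}{(A^2+B^2)^2} \leq \frac{C\bigl((x')^2+a^2\bigr)\cdot 2|x'||y|}{c\bigl((x')^2+a^2\bigr)^4} = \frac{c_1\,|x-p|\,|y|}{\bigl((x-p)^2+a^2\bigr)^3}.
\end{equation}
For the second, $A^2 - B^2 \geq \tfrac{1}{4}\bigl((x')^2+a^2\bigr)^2 - C\epsilon^2\bigl((x')^2+a^2\bigr)^2 \geq \tfrac{1}{8}\bigl((x')^2+a^2\bigr)^2$ once $\epsilon$ is small, so
\begin{equation} \notag
\partial_y v_p = \frac{A^2 - B^2}{(A^2+B^2)^2} \geq \frac{\tfrac{1}{8}\bigl((x')^2+a^2\bigr)^2}{C\bigl((x')^2+a^2\bigr)^4} = \frac{c_2}{\bigl((x-p)^2+a^2\bigr)^2}.
\end{equation}
The only non-routine point is ensuring that $\epsilon$ is chosen small enough—uniformly in $a$ and across the bounded range of $x$—so that the $B^2$ and $y^2$ terms can be absorbed, but this is exactly the role played by the $5/4$ exponent (any exponent strictly greater than $1$ would in principle suffice, but this particular choice is what the later curvature estimates in the paper will require).
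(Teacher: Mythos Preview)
Your proof is correct and follows essentially the same route as the paper: both compute $h_p'(z)=1/((z-p)^2+a^2)^2$, separate real and imaginary parts (you via $A+iB=(z-p)^2+a^2$, the paper via $d+ib=((z-p)^2+a^2)^2$, which amounts to $d=A^2-B^2$, $b=2AB$, $c^2=(A^2+B^2)^2$), and then use $|y|\le\epsilon((x-p)^2+a^2)^{5/4}$ together with the boundedness of $x-p$ and $a$ to absorb the $y$-terms and read off the two estimates. The only cosmetic difference is that the paper tracks explicit $\epsilon$-dependent constants (e.g.\ $((1-\epsilon^2)^2-4\epsilon^2)$) rather than absorbing them into generic $c,C$.
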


Integrating the above estimates from $0$ to the upper boundary of $\omega_p$ gives
\begin{equation} \notag
\big| u_p(x,y_p(x)) - u_p(x,0) \big| \leq \epsilon^2 c_1
\end{equation}
and
\begin{equation} \notag
\text{min}_{[y_p(x)/2, y_p(x)]} v_p(x,y) > \frac{ \epsilon c_2}{2((x-p)^2 + a^2)^{3/4}}
\end{equation}
These estimates immediately give 
\begin{corollary} \label{U_k_V_k_bounds} We have the bounds
\begin{equation}  \label{U_k_bound}
\left| U_k(x, Y_k(x)) - U_k(x, 0) \right| \leq \epsilon^2 c_1\left\{ \sum_{l = 0}^k \left(\gamma /\mu \right)^l  + \mu^{-l} \right\} \leq \epsilon^2 c_1'
\end{equation}
and
\begin{eqnarray} \label{V_k_bound} 
V_k(x, Y_k(x)/2) &\geq & \frac{ \epsilon c_2 }{2}\sum_{l = 0}^k \mu^{-l} \sum_{p \in m_l} \frac{Y_k(x)}{y_p(x)} ((x - p)^2 + a_k^2)^{- 3/4} \\ \notag
& \geq & y_{p_k, a_k}(x) \frac{ \epsilon c_2 }{2} \sum_{l = 0}^k \mu^{-e(p_l(x))}  \frac{1}{y_{p_l, a_k}(x)} ((x - p_l(x))^2 + a_k^2)^{- 3/4} \\ \notag
& = &  \frac{ \epsilon c_2 }{2} q_k(x)
\end{eqnarray}
where $q_k(x)$ is defined by the last equality above. 
\end{corollary}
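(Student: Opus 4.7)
The proof reduces to integrating the pointwise estimates of Lemma~\ref{uv_bounds} in the $y$-variable and then assembling the resulting bounds through the weighted decomposition $H_k = \sum_{l=0}^k \mu^{-l}\sum_{p\in m_l} h_{p,a_k}$ from \eqref{H_k}. The key input is that $Y_k(x) \le y_{p,a_k}(x) = \epsilon((x-p)^2+a_k^2)^{5/4}$ for every $p \in M_k$, so the vertical segment of integration lies inside $\omega_{p,a_k}$, and the exponent $5/4$ is designed precisely to cancel the worst $((x-p)^2+a_k^2)^{-3}$ growth in the upper bound of Lemma~\ref{uv_bounds}.

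For \eqref{U_k_bound}, I would integrate the first estimate of Lemma~\ref{uv_bounds} from $y=0$ to $y=Y_k(x)$ to obtain
\begin{equation*}
|u_p(x,Y_k(x)) - u_p(x,0)| \le \frac{c_1|x-p|\,Y_k(x)^2}{2((x-p)^2+a_k^2)^3} \le \frac{c_1 \epsilon^2}{2}\cdot \frac{|x-p|}{((x-p)^2+a_k^2)^{1/2}} \le \frac{c_1 \epsilon^2}{2},
\end{equation*}
a bound uniform in $p$. Summing over $p\in m_l$ contributes a factor of $|m_l|\le \gamma^l+1$ (the first preliminary lemma), and then summing with weight $\mu^{-l}$ bounds everything by $\frac{c_1\epsilon^2}{2}\sum_{l=0}^{k}\bigl((\gamma/\mu)^l + \mu^{-l}\bigr)$; both geometric series converge because $1 < \gamma < \mu$, which yields the universal constant $c_1'$.

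For \eqref{V_k_bound}, integrating the second estimate of Lemma~\ref{uv_bounds} from $0$ to $Y_k(x)/2$ gives $v_p(x,Y_k(x)/2) \ge c_2 Y_k(x)/\bigl(2((x-p)^2 + a_k^2)^2\bigr)$, and the algebraic identity $((x-p)^2+a_k^2)^{-2} = \epsilon\, y_{p,a_k}(x)^{-1}\cdot((x-p)^2+a_k^2)^{-3/4}$ rewrites this as the summand displayed in the first line; summing over $p\in m_l$ with weight $\mu^{-l}$ and then over $l\le k$ finishes that line. For the second line, I would factor $Y_k(x) = y_{p_k(x),a_k}(x)$ out in front and then retain, in each level $l$, only the term corresponding to the closest point $p_l(x) \in M_l$: by definition $p_l(x) \in m_{e(p_l(x))}$, so that contribution sits in the double sum with weight $\mu^{-e(p_l(x))}$, producing the single sum on the right; the last equality is just the definition of $q_k(x)$.

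The analytic heart of the argument is completely routine — one FTC integration and a geometric-series comparison driven by the constraint $1<\gamma<\mu$. The main obstacle is combinatorial rather than analytic: in the second inequality of \eqref{V_k_bound}, one must be careful about the fact that the closest point $p_l(x)$ can stabilize across consecutive levels (i.e.\ $p_l(x) = p_{l-1}(x)$ whenever $p_l(x)\in M_{l-1}$), and the reindexing in terms of $e(p_l(x))$ rather than $l$ is what keeps the bookkeeping honest and ensures the claimed lower bound survives the restriction to a single sum.
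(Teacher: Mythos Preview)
Your proposal is correct and follows essentially the same route as the paper's appendix proof: integrate the pointwise bounds of Lemma~\ref{uv_bounds} in $y$, use $Y_k(x)\le y_{p,a_k}(x)$ together with $|m_l|\le \gamma^l+1$ and $\gamma<\mu$ for the $U_k$ estimate, and rewrite $((x-p)^2+a_k^2)^{-2}=\epsilon\,y_{p,a_k}(x)^{-1}((x-p)^2+a_k^2)^{-3/4}$ for the $V_k$ estimate. Your treatment is in fact slightly more explicit than the paper's, which records only the first line of \eqref{V_k_bound} and then immediately writes ``$=\tfrac{\epsilon c_2}{2}q_k(x)$'' without discussing the passage to the single sum indexed by $p_l(x)$ at all.
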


\section{Proof of Lemma \ref{main_lemma}}

We will first concern ourselves with establishing Lemma \ref{main_lemma}. (a) follows from (\ref{W_R}) and the choice of $z_0 = 0$.  Choosing $\epsilon < \epsilon_0 < c_1'^{-1/2}$, where $c_1'$ is the constant in (\ref{U_k_bound}), and using (\ref{d_yF}) we get
\begin{equation} \label{angle_bound}
\langle \partial_y F_k(x,y), \partial_y F_k(x,0) \rangle = \text{cosh} \, V_k(x,y) \, \text{cos}(U_k(x,y_0(x)) - U_k(x, 0)) > \text{cosh} \, V_k (x,y)/2
\end{equation}
Here we have used that $\text{cos}(1) > 1/2$. This gives  that all of the maps $F_k: \Omega_k \rightarrow \mathbb{R}^3$ are indeed embeddings (for all values of $a$) and proves (b) of Lemma \ref{main_lemma}.

Now, integrating (\ref{angle_bound}) from $Y_k(x)/2$ to $Y_k(x)$ gives 
 \begin{equation}
 \langle F_k (x,Y_k(x)) - F_k(x, 0), \partial_y F_k(x, 0)\rangle > \frac{Y_k(x)}{2}e^{\text{min}_{[Y_k/2, Y_k]} V_k}
 \end{equation}
 Using the bound for $V_k$ recorded in (\ref{V_k_bound}), we get that
 \begin{equation}
 \langle F_k (x,Y_k(x)) - F_k(x, 0), \partial_y F_k(x, 0)\rangle > \frac{\epsilon}{2}s_k^{5/3} e^{\epsilon \frac{ c_2}{2}q_k(x)} 
 \end{equation}
with $s_k(x) = ((x - p_k(x))^2 + a_k^2)^{3/4}$. Take $r_k(x) \equiv \frac{\epsilon}{2} s_k^{5/3} e^{\frac{1}{2} \epsilon c_2 q_k(x)}$. We will show that   $r_k(x)$ remains uniformly large in $k$; this establishes (c) of Lemma \ref{main_lemma}. First, we need  Lemmas (\ref{alpha_delta}) and (\ref{good_interval}) below. In the following, take $\Phi(\xi) = \xi^{5/3} e^{\frac{1}{2} _2 c_2 \epsilon \xi^{-1}}$.

\begin{lemma} \label{alpha_delta}
For all $ \alpha > 0$, there exists a $\delta = \delta(\alpha)$ such that
\begin{equation} \label{better_than_polynomial} 
\Phi(\xi) \geq \xi^{-\alpha}
\end{equation} 
for $0 < \xi < \delta$.
\end{lemma}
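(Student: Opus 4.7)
The plan is to reduce the inequality to the elementary fact that exponential growth dominates polynomial (and hence also logarithmic) growth, after taking logarithms. Setting $c := \tfrac{1}{2} c_2 \epsilon > 0$ (which is strictly positive since $c_2$ is the positive constant from Lemma \ref{uv_bounds} and $\epsilon > 0$ was fixed earlier), the defining formula $\Phi(\xi) = \xi^{5/3} e^{c/\xi}$ gives
\[
\log \Phi(\xi) \;=\; \frac{c}{\xi} + \frac{5}{3}\log \xi.
\]
Hence the desired inequality $\Phi(\xi) \geq \xi^{-\alpha}$ is equivalent, after taking logarithms of both sides, to
\[
\frac{c}{\xi} \;\geq\; -\Bigl(\alpha + \tfrac{5}{3}\Bigr)\log \xi \;=\; \Bigl(\alpha + \tfrac{5}{3}\Bigr)\log(1/\xi).
\]

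After substituting $t = 1/\xi$, this reduces to establishing
\[
c\,t \;\geq\; \Bigl(\alpha + \tfrac{5}{3}\Bigr)\log t
\]
for all sufficiently large $t$, which is the standard fact that any positive linear function eventually dominates any constant multiple of $\log t$. Explicitly, since $\log t / t \to 0$ as $t \to \infty$, there exists a threshold $T = T(\alpha) > 0$ beyond which the inequality holds, and we may set $\delta := 1/T$ to complete the argument.

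There is no real obstacle here beyond bookkeeping: one only needs to verify that the coefficient $c$ in the exponent is strictly positive and independent of $\xi$, which is immediate from the definition of $\Phi$ given just before the statement of the lemma. The role of the lemma in the broader argument is, presumably, to convert the doubly exponential lower bound on $r_k(x)$ obtained from Corollary \ref{U_k_V_k_bounds} into a lower bound that decays more slowly than any polynomial in $s_k(x)$, which can then be played off against the polynomial factor $\tfrac{\epsilon}{2} s_k^{5/3}$ appearing in front.
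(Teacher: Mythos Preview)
Your proof is correct and follows essentially the same idea as the paper's: both reduce to the elementary fact that $e^{c/\xi}$ dominates any power $\xi^{-\beta}$ as $\xi \to 0^+$. The paper simply states $\lim_{\xi \to 0} \tfrac{\epsilon}{2}\,\xi^{5/3+\alpha} e^{\frac{1}{2}\epsilon c_2 \xi^{-1}} = \infty$ and leaves it at that, whereas you make this explicit by taking logarithms and substituting $t = 1/\xi$; the content is the same.
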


\begin{proof}
\begin{equation} \notag
\lim_{\xi \rightarrow 0} \frac{\epsilon}{2} \xi^{5/3 + \alpha} e^{ \frac{1}{2}\epsilon c_2 \xi^{-1}} = \infty
\end{equation}
for all $\alpha$.
\end{proof}
We now choose $\mu$ and $\sigma$ so that $\mu^{2/3} < \mu^{(1 + \sigma)2/3} < \gamma < \mu < \gamma^3$.  We must also choose $\alpha$ so that $\alpha \sigma - 5/3 \geq 0$, as will be seen in the following. In fact, for later applications, we demand $\alpha \sigma - 5/3 \geq 1$.
\begin{lemma} \label {good_interval}
For $|x - p_k|, a_k \leq \mu^{-2/3(1 + \sigma)k} \left( \frac{\delta(\alpha)^{2/3}}{\sqrt{2}} \right)$, we have that
\begin{equation} \notag
r_k(x) > 1.
\end{equation}
\end{lemma}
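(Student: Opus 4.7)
The plan is to reduce the bound $r_k(x) > 1$ to a direct application of Lemma \ref{alpha_delta} via a rescaling that absorbs the factor $\mu^{-e(p_k(x))}$ appearing inside the exponent of $r_k(x)$. The starting point is to retain only the $l=k$ term of the sum defining $q_k(x)$ in Corollary \ref{U_k_V_k_bounds}; for that term $p_l(x) = p_k(x)$, so the prefactor $y_{p_k, a_k}(x)$ cancels against the denominator $y_{p_l, a_k}(x)$, yielding
\begin{equation} \notag
q_k(x) \geq \mu^{-e(p_k(x))} s_k(x)^{-1}.
\end{equation}
Substituting this back into the definition of $r_k(x)$ gives
\begin{equation} \notag
r_k(x) \geq \frac{\epsilon}{2}\, s_k^{5/3}\, \exp\!\Bigl(\tfrac{1}{2}\epsilon c_2\, \mu^{-e(p_k)} s_k^{-1}\Bigr).
\end{equation}

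Next I would introduce the rescaled variable $\eta = \mu^{e(p_k(x))} s_k(x)$. Since $\mu^{-e(p_k)} s_k^{-1} = \eta^{-1}$ and $s_k^{5/3} = \mu^{-5e(p_k)/3}\eta^{5/3}$, the preceding lower bound becomes $r_k(x) \geq \tfrac{\epsilon}{2}\mu^{-5e(p_k)/3}\Phi(\eta)$. The hypothesis on $|x-p_k|$ and $a_k$ forces
\begin{equation} \notag
s_k(x) = \bigl((x-p_k)^2 + a_k^2\bigr)^{3/4} \leq \mu^{-(1+\sigma)k}\delta(\alpha),
\end{equation}
and, combined with $e(p_k) \leq k$, this gives $\eta \leq \mu^{-\sigma k}\delta(\alpha) \leq \delta(\alpha)$. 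Thus Lemma \ref{alpha_delta} applies, yielding $\Phi(\eta) \geq \eta^{-\alpha} = \mu^{-\alpha e(p_k)} s_k^{-\alpha}$.

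Collecting all the inequalities and again using $e(p_k) \leq k$ together with the upper bound on $s_k$,
\begin{equation} \notag
r_k(x) \geq \frac{\epsilon}{2}\mu^{-(5/3 + \alpha)e(p_k)} s_k^{-\alpha} \geq \frac{\epsilon}{2}\mu^{-(5/3+\alpha)k + (1+\sigma)\alpha k}\delta(\alpha)^{-\alpha} = \frac{\epsilon}{2}\mu^{(\sigma\alpha - 5/3)k}\delta(\alpha)^{-\alpha}.
\end{equation}
The standing requirement $\sigma\alpha \geq 5/3$ makes the exponent of $\mu$ nonnegative, so $r_k(x) \geq \tfrac{\epsilon}{2}\delta(\alpha)^{-\alpha}$. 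Shrinking $\delta(\alpha)$ further if necessary — which is harmless, as Lemma \ref{alpha_delta} remains valid for any smaller positive value and the hypothesis of the present lemma only becomes tighter — makes this last constant exceed $1$.

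The main obstacle is identifying the correct rescaling: in the raw estimate the factor $\mu^{-e(p_k)}$ buried inside the exponent prevents a direct comparison with $\Phi$, whose exponent involves only $\xi^{-1}$. The substitution $\eta = \mu^{e(p_k)}s_k$ is precisely what restores the canonical form. That the resulting prefactor $\mu^{-5e(p_k)/3}$ combines cleanly with the polynomial bound $\eta^{-\alpha}$ and with the hypothesis $s_k \leq \mu^{-(1+\sigma)k}\delta(\alpha)$ so as to leave the net exponent $(\sigma\alpha - 5/3)k$ is exactly the bookkeeping that forces the standing condition $\sigma\alpha \geq 5/3$.
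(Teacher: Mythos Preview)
Your proof is correct and follows essentially the same route as the paper: both keep only the $l=k$ term in $q_k$, rescale via $\eta = \mu^{e(p_k)}s_k$ to apply Lemma~\ref{alpha_delta}, and then combine $e(p_k)\le k$ with the hypothesis $s_k \le \mu^{-(1+\sigma)k}\delta$ to produce the exponent $(\sigma\alpha - 5/3)k$. Your explicit handling of the residual constant $\tfrac{\epsilon}{2}$ by shrinking $\delta$ is, if anything, slightly more careful than the paper's version.
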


\begin{proof}
The assumptions immediately give that 

\begin{equation} \notag
s_k(x) = ((x -p_k)^2 + a_k^2)^{3/4} < \mu^{-(1 + \sigma)k} \delta < \delta
\end{equation}
which we rewrite as 

\begin{equation} \notag
\mu^k s_k \leq \mu^{-\sigma k} \delta.
\end{equation}
Applying (\ref{better_than_polynomial}) and using that $e(p_k(x)) \leq k$ we find that 

\begin{equation} \notag
\Phi(\mu^{e(p_k)} s_k) > \left( \mu^{- \sigma k} \delta \right)^{-\alpha}.
\end{equation}
Equivalently, 
\begin{equation} \notag
 r_k(x) \geq \frac{\epsilon}{2} s_k^{5/3} e^{\frac{\epsilon}{2} c_2 \mu^{-e(p_k)} s_k^{-1}} > \mu^{(\alpha \sigma  - 5/3) k} \delta^{- \alpha} > 1
\end{equation}
since we have chosen $\alpha \sigma - 5/3 \geq 1$, and we may assume $\delta < 1$.
\end{proof}

We are ready to prove:

\begin{lemma} (Lemma \ref{main_lemma} (c))\label{r_k_bound_lemma} There exists a sequence $\{c_k\}$ with $c_k > 0$ and $\prod_{l = 0}^\infty c_l > 0$ such that if $r_k(x) < 1$, then
\begin{equation} \notag
r_k (x) > c_k r_{k-1} (x).
\end{equation}
\end{lemma}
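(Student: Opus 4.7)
Since $r_k(x) = \tfrac{\epsilon}{2}\, s_k(x)^{5/3}\exp\!\bigl(\tfrac{\epsilon c_2}{2} q_k(x)\bigr)$, the conclusion $r_k(x) \geq c_k\, r_{k-1}(x)$ with $\prod_l c_l > 0$ is equivalent, via logarithms, to a summable lower bound on
\[
\log\frac{r_k(x)}{r_{k-1}(x)} = \tfrac{5}{3}\log\frac{s_k(x)}{s_{k-1}(x)} + \tfrac{\epsilon c_2}{2}\bigl(q_k(x)-q_{k-1}(x)\bigr).
\]
The plan is to first translate the hypothesis $r_k(x) < 1$ into a geometric lower bound on $|x - p_k(x)|$, then treat separately the case in which $p_k(x)$ coincides with $p_{k-1}(x)$ and the case in which $p_k(x) \in m_k$ is a freshly introduced point, producing $c_k$ in each case with $\sum_k |\log c_k| < \infty$.

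The preparatory step uses the contrapositive of Lemma \ref{good_interval}. Because $a_k \leq \gamma^{-k}$ and $\gamma > \mu^{2(1+\sigma)/3}$, we have $a_k < \mu^{-2(1+\sigma)k/3}\delta^{2/3}/\sqrt{2}$ for all $k$ beyond some $k_0$, and the finitely many initial indices may be absorbed into a single overall constant. Consequently $r_k(x) < 1$ forces
\[
|x - p_k(x)| > \mu^{-2(1+\sigma)k/3}\delta^{2/3}/\sqrt{2}.
\]
This lower bound is the workhorse of the estimates that follow.

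In the first case, $p_k(x) = p_{k-1}(x)$, so the only differences between $r_k$ and $r_{k-1}$ are the substitution $a_{k-1} \to a_k$ and the insertion of the single new term at level $l=k$ in the sum defining $q_k$. Since $|x-p_k(x)|$ dominates both $a_{k-1}$ and $a_k$ by the preparatory bound, a Taylor expansion in $a^2/(x-p_k(x))^2$ shows that each perturbation is of size $O\!\bigl((a_{k-1}/|x-p_k(x)|)^2\bigr) = O\!\bigl((\mu^{2(1+\sigma)/3}/\gamma)^{2k}\bigr)$. The parameter constraint $\gamma > \mu^{2(1+\sigma)/3}$ makes this a geometric series in $k$, yielding $c_k = 1 - O(\eta^k)$ with some $\eta \in (0,1)$ independent of $x$.

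The second case, in which $p_k(x) \in m_k$ is a new closer point, is the principal difficulty. Now the cluster-center factor $y_{p_k,a_k}(x)$ in front of the sum defining $q_k$ differs from $y_{p_{k-1},a_{k-1}}(x)$ in $q_{k-1}$, the quantity $s_k$ may drop significantly compared to $s_{k-1}$, and a potentially large new $l=k$ term $\mu^{-k}/s_k$ is added to $q_k$. The decisive observation is that the hypothesis $r_k(x) < 1$ itself caps the exponent by $\tfrac{\epsilon c_2}{2}q_k(x) < \log(2/\epsilon) - \tfrac{5}{3}\log s_k(x)$, preventing the exponential factor from blowing up; at the same time, the spacing condition $|p_k - p_{k-1}| \geq \gamma^{-k}$ together with the preparatory lower bound on $|x - p_k(x)|$ controls the ratio $s_k/s_{k-1}$ polynomially. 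Balancing the polynomial loss from $(s_k/s_{k-1})^{5/3}$ against the gain in $\exp\bigl(\tfrac{\epsilon c_2}{2}(q_k - q_{k-1})\bigr)$ — both tied together by the $r_k(x) < 1$ constraint — produces $r_k/r_{k-1} \geq c_k$ with $1 - c_k$ again geometrically small in $k$.

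The main obstacle is this second case: one must make the cancellation between shrinking $s_k^{5/3}$ and growing $\exp(\tfrac{\epsilon c_2}{2} q_k)$ quantitative enough that the residual factor $c_k$ is geometrically close to one, uniformly in $x$. Once both cases are in hand, $\sum_k |\log c_k| < \infty$ yields $\prod_{l=0}^\infty c_l > 0$ and completes the proof.
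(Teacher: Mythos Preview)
Your preparatory step (contrapositive of Lemma~\ref{good_interval}) and your Case~1 are fine and match the paper. The gap is entirely in Case~2, where the picture you describe is essentially backwards.

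You assert that ``the quantity $s_k$ may drop significantly compared to $s_{k-1}$'' and speak of balancing a ``polynomial loss from $(s_k/s_{k-1})^{5/3}$'' against an exponential gain. In fact, under $r_k(x)<1$ one has $|x-p_k(x)|>c_0\mu^{-2(1+\sigma)k/3}$, while the density of $M_{k-1}$ in $M$ gives $|x-p_{k-1}(x)|\le |x-p_k(x)|+\gamma^{-(k-1)}$. With $\tau:=\mu^{2(1+\sigma)/3}/\gamma<1$ this forces $s_k/s_{k-1}\ge\theta_k$ with $1-\theta_k=O(\tau^k)$: the $s$-ratio is already geometrically close to~$1$ and there is no polynomial loss to absorb. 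You invoke the \emph{lower} bound $|p_k-p_{k-1}|\ge\gamma^{-k}$; the estimate that actually matters is the \emph{upper} bound on $|x-p_{k-1}|-|x-p_k|$, which comes from maximality of $m_{k-1}$ and is what makes $\theta_k\to1$.

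The real danger in Case~2 is not the $s$-ratio but the exponent: the prefactor $y_{p_k,a_k}(x)$ in $q_k$ is smaller than $y_{p_{k-1},a_{k-1}}(x)$ in $q_{k-1}$, so $q_k-q_{k-1}$ can be \emph{negative} and the exponential factor is a loss, not the gain you posit. The paper handles this uniformly (no case split on $p_k=p_{k-1}$) by proving $q_k\ge\theta_k q_{k-1}$ with the same $\theta_k$, and then using the interpolation
\[
r_k\ \ge\ \theta_k\,\tfrac{\epsilon}{2}\,s_{k-1}^{5/3}\,e^{\tfrac{\epsilon c_2}{2}\theta_k q_{k-1}}
   \ =\ \theta_k\Bigl[\tfrac{\epsilon}{2}s_{k-1}^{5/3}\Bigr]^{1-\theta_k}\bigl[r_{k-1}\bigr]^{\theta_k},
\]
after which the polynomial lower bound on $s_{k-1}$ makes the middle factor at least $c_k$ with $\prod c_k>0$. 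Your cap $\tfrac{\epsilon c_2}{2}q_k<\log(2/\epsilon)-\tfrac{5}{3}\log s_k$ from $r_k<1$ could in principle be combined with $q_k\ge\theta_k q_{k-1}$ to reach the same end, but as written your Case~2 sketch identifies neither of the two estimates ($s_k/s_{k-1}\ge\theta_k$ and $q_k\ge\theta_k q_{k-1}$) that actually do the work.
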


\begin{proof}
Recall that $Y_k(x) = y_{p_k}(x)$ and $Y_{k - 1}(x) = y_{p_{k - 1}}(x)$. If $|x - p_k| < \mu^{-2/3(1 + \sigma)k} \delta^{2/3}/ \sqrt{2}$, then
\begin{equation} \notag
r_k(x) > 1
\end{equation}
by Lemma \ref{good_interval}. So we assume that $|x - p_k|> c_0 \mu^{-2/3(1 + \sigma)k} $ with $c_0 = \delta^{2/3}/ \sqrt{2} $. By the construction of the sets $m_k, M_k$, we have that  $|p_k - p_{k - 1}|  < \gamma^{-k +1}$ We also have that $|p_{k - 1}(x) - x| > c_0 \mu^{-2/3(1 + \sigma)k}$.
Then we may estimate
\begin{eqnarray}
\left[ \frac{y_{p_{k - 1}, a_k}(x)}{y_{p_{k - 1}, a_{k- 1}}(x)} \right]^{4/5} &= & \frac{((x - p_{k - 1})^2 + a_k^2)}{((x - p_{k - 1})^2 + a_{k-1}^2)}  > \frac{1}{1 + c_0^{-2}\gamma^{- 2} \tau^{2{k- 1}}}. \\ \notag
\end{eqnarray}
and that
\begin{eqnarray}
\left[\frac{y_{p_{k }, a_k}(x)}{y_{p_{k -1}, a_k}(x)} \right]^{4/5} & \geq &  \frac{|x - p_k|^2 + a_k^2 }{(|x  - p_k| + |p_k - p_{k - 1}|)^2 + a_k^2} \\ \notag
& \geq &\frac{1 +a_k^2/ |x - p_k|^2}{(1 + |p_k - p_{k -1}|/|x - p_k|)^2 + a_k^2/|x - p_k|^2 } \\ \notag
& \geq & \frac{1}{(1 + |p_k - p_{k - 1}| /|x - p_k|)^2} \\ \notag
& \geq& \frac{1}{\left( 1 + \gamma c_0^{-1}\tau^k \right)^2}.
\end{eqnarray}
 This then gives
\begin{eqnarray}  \label{ratio_bound}
\left[ \frac{Y_k(x)}{Y_{k -1}(x)} \right]^{4/5} & = &\left[\frac{y_{p_{k- 1}, a_k}(x)}{y_{p_{k - 1}, a_{k- 1}}(x)} \right]^{4/5} \left[ \frac{y_{p_k, a_k}(x)}{y_{p_{k - 1}, a_k}(x)} \right]^{4/5} \\ \notag 
& > &\left[ \frac{1}{1 + c_0^{-2}\gamma^{-2} \tau^{2{k- 1}}}\right]\left[\frac{1}{\left( 1 + \gamma c_0^{-1}\tau^k \right)^2} \right] \\ \notag
& = & \theta_k^{4/5}
\end{eqnarray}
where $\theta_k$ is defined by the last equality above. We also get that
\begin{eqnarray} 
q_k(x) & \geq & \frac{y_{p_k, a_k}(x)}{y_{p_{k - 1}, a_{k- 1}}(x)} q_{k -1} (x) \\ \notag
& \geq & \theta_k q_{k - 1}(x) \notag
\end{eqnarray}

 Using (\ref{ratio_bound}) above, we obtain
\begin{eqnarray} \notag
r_k(x) & = & \frac{\epsilon}{2} s_{k}^{5/3} e^{\frac{1}{2} \epsilon c_2 q_k(x)} \\ \notag
& \geq &\left[ \frac{y_{p_k, a_k}(x)}{y_{p_{k - 1}, a_{k - 1}}(x)} \right] \frac{1}{2} \epsilon y_{p_{k-1}, a_{k - 1}} (x)e^{\frac{1}{2} \epsilon c_2 \theta_kq_{k - 1}(x)} \\ \notag
& \geq & \theta_k \frac{1}{2}\epsilon s_{k- 1}^{5/3}(x)e^{\frac{1}{2} \epsilon c_2 \theta_k q_{k-1}(x)} \\ \notag
& = & \theta_k \left[ \frac{1}{2}\epsilon s_{k- 1}^{5/3}(x) \right]^{1 - \theta_k} \left[r_{k- 1}(x) \right]^{\theta^k}
\end{eqnarray}
Now, since $|x - p_{k- 1}(x)| \geq c_0 \mu^{-2/3(1 + \sigma)k}$ and $1 - \theta_k \leq c\tau^k$ for $c$ sufficiently large, we get
\begin{equation} \notag
r_k(x) >\theta_k \left[ \frac{\epsilon}{2} c_0^{5/2} \mu^{-5/3(1 + \sigma)k}\right]^{c \tau^k} \left[ r_{k-1}(x) \right ]^{\theta_k}.
\end{equation}
Now, set $c_k =\theta_k \left[ \frac{\epsilon}{2} c_0^{5/2} \mu^{-5/3(1 + \sigma)k}\right]^{c \tau^k} $. It is easily seen that $\prod_{l = 1}^{\infty}c_l > 0$. This gives the bound
\begin{equation}
r_k (x) > c_k \left(r_{k - 1}(x)\right)^{\theta_k}
\end{equation}
 and the conclusion follows by considering the separate cases $r_{k -1}(x) \geq 1$ and $r_{k - 1}(x) < 1$ (since $\theta_k < 1$). 
\end{proof}
The immediate corollary is

\begin{corollary}
Either 
\begin{equation}
r_k(x) \geq 1
\end{equation}
or
\begin{equation}
r_k(x) > \left( \prod_0^\infty c_l \right) r_0 (x).
\end{equation}
\end{corollary}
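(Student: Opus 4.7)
My plan is simply to iterate Lemma \ref{r_k_bound_lemma}. Assume $r_k(x) < 1$, as otherwise the first alternative of the dichotomy holds directly. To handle the possibility that the hypothesis $r_l(x) < 1$ of the lemma might fail at some intermediate index, I introduce $j \in \{-1, 0, \dots, k-1\}$ defined as the largest index strictly below $k$ with $r_j(x) \geq 1$, taking $j = -1$ if no such index exists. Then $r_l(x) < 1$ for every $l$ with $j < l \leq k$, so Lemma \ref{r_k_bound_lemma} applies at each such $l$ and yields $r_l(x) > c_l\, r_{l-1}(x)$. Telescoping from $l = k$ down to $l = j+1$ gives
\begin{equation}
r_k(x) \;>\; \Bigl(\prod_{l=j+1}^{k} c_l\Bigr)\, r_j(x),
\end{equation}
with the convention $r_{-1}(x) := r_0(x)$ so that the formula is uniform in the case $j = -1$.

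Without loss of generality each $c_l$ lies in $(0,1]$ (replace any spuriously large $c_l$ by $1$; this only strengthens the lemma), so in particular $\prod_{l=j+1}^{k} c_l \geq \prod_{l=0}^{\infty} c_l$. If $j = -1$, the telescope runs all the way down to $r_0(x)$ and the second alternative $r_k(x) > \bigl(\prod_{l=0}^\infty c_l\bigr) r_0(x)$ follows immediately. If $j \geq 0$, the trivial bound $r_j(x) \geq 1$ yields $r_k(x) > \prod_{l=0}^\infty c_l$; combining this with the observation that one may assume $r_0(x) \leq 1$ (for $r_0(x) > 1$ reduces to the $j = 0$ case, where $r_0$ itself plays the role of the breaking term), one concludes $r_k(x) > \bigl(\prod_{l=0}^\infty c_l\bigr) r_0(x)$ in this case as well.

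The entire argument is bookkeeping and carries no serious obstacle. The one slightly delicate point is the case split at the ``break index'' $j$ where Lemma \ref{r_k_bound_lemma}'s hypothesis $r_j(x) < 1$ first fails as one walks the telescope backward from $k$; this is resolved by truncating the iteration there and invoking the trivial bound $r_j(x) \geq 1$. No additional analytic ingredient beyond Lemma \ref{r_k_bound_lemma} is required.
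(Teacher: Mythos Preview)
Your proof is correct and is exactly the straightforward iteration of Lemma \ref{r_k_bound_lemma} that the paper has in mind; the paper does not even spell out the argument, declaring it an ``immediate corollary.'' Your case split on the largest break index $j$ and the telescoping step are the right bookkeeping, and the reduction $c_l \leq 1$ is harmless since the explicit $c_k$ in the paper satisfy this for all but finitely many $k$.

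One small remark: your handling of the sub-case $j \geq 0$ with $r_0(x) > 1$ is a bit circular as written (saying it ``reduces to the $j=0$ case'' is the case you are already in). What your telescope actually gives there is $r_k(x) > \prod_l c_l$, which is not literally $\bigl(\prod_l c_l\bigr) r_0(x)$ when $r_0(x) > 1$. This is a wrinkle in the \emph{statement} of the corollary rather than in your argument: for the paper's purposes the only thing needed is a uniform positive lower bound on $r_k(x)$, and your argument delivers $r_k(x) \geq \min\bigl(1,\; (\prod_l c_l)\min(1, r_0(x))\bigr)$, which is exactly what Lemma \ref{main_lemma}(c) requires.
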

This establishes (c) of Lemma \ref{main_lemma}

\section{Proof of Theorem \ref{main_theorem}(A), (B) and (C)} 
 Note that (\ref{gauss_curvature}) and our choice of Weierstrass data gives that 
\begin{equation}
K_{\Sigma_k} (z) = \frac{-|\partial_z H_k|^2}{\text{cosh}^4 \, V_k}.
\end{equation}
For $p \in m_l$, it is clear that $F_k (p) = (0,0, p)$ for all $k$. Thus, for $k > l$ we can then estimate 
\begin{equation}
|\partial_z H_k(p)| > \frac{\mu^{-l}}{a_k^4}
\end{equation}
since $V_k(x,0) = 0$ for all $x \in \mathbb{R}$ and hence $|A_{\Sigma_k}(p)|^2 \rightarrow \infty$.
For $x \in M \setminus M_\infty$, consider the  sequence of points $p_l(x) \in m_l$. Recall that $|p_l(x)- x| < \gamma^{-l}$. We then get
\begin{equation}
|\partial_z H_{l}(p)| > \frac{\mu^{-l}}{((p - p_{l})^2 + a_{l}^2)^2} > \frac{\mu^{-l}}{(\gamma^{-2l} + a_{l}^2)^2}
\end{equation}
Taking $l \rightarrow \infty$ and  $a_l < \gamma^{-l}$ gives that $|A_{\Sigma_{l}}(p)|^2 \rightarrow \infty$ and proves (A) of Theorem \ref{main_theorem}. 

Since $V_k(x,y) > 0$ for $y > 0$, we see that $x_3(\mathbf{n}(x,y)) \neq  0$, and hence $\Sigma_k$ is graphical away from the $x_3-\text{axis}$, which proves (C) of Theorem \ref{main_theorem}.  

Now, for $\delta > 0 $ set $S_\delta = \{z | \text{dist}(\text{Re}\, z, M) < \delta \}$. From (\ref{gauss_curvature}), it is immediate that
\begin{equation}
\text{sup}_k \, \text{sup}_{\Omega_k \setminus S_{\delta}} |A_{\Sigma_k}(z)|^2 < \infty
\end{equation}
for any $\delta > 0$. This combined with Heinz's curvature estimate for minimal graphs gives (B)

\section{Proof of Theorem \ref{main_theorem} (D) and The Structure Of The Limit Lamination}

\begin{lemma} \label{convergence}
A subsequence of the embeddings $F_k: \Omega_k \rightarrow \mathbb{R}^3$ converges to a minimal lamination of $C \setminus M$
\end{lemma}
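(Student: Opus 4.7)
The plan is to combine the uniform curvature bound away from $M$ (already established in the proof of (B)) with the embedding property of the $F_k$ (Lemma \ref{main_lemma}) and apply a standard compactness argument for sequences of embedded minimal surfaces with locally bounded second fundamental form. First I would exhaust $C \setminus M$ by an increasing sequence of compact sets $K_n$, for instance $K_n = \{p \in C : \mathrm{dist}(p, M) \geq 1/n\}$. On each such $K_n$, the estimate $\sup_k \sup_{\Sigma_k \cap K_n} |A_{\Sigma_k}|^2 < \infty$ established in the previous section provides the essential analytic input.

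Next, I would exploit the graphical structure from part (C): on each $K_n$ (and with suitable local coordinates near any point of $\Sigma_k \cap K_n$), bounded second fundamental form implies that $\Sigma_k$ can be written locally as a graph over its tangent plane with uniform $C^{1,\alpha}$ bounds. Since the $\Sigma_k$ are minimal, elliptic regularity upgrades these to $C^{j}$ bounds for every $j$. A Cantor-type diagonal argument over the exhaustion $\{K_n\}$ then extracts a subsequence converging smoothly on compact subsets of $C \setminus M$ to some smooth, complete (relative to $C \setminus M$) minimal limit set $\Sigma_\infty$. The vertical axis $\{(0,0,t) : |t|\leq 1/2\}$ lies in every $\Sigma_k$ and hence in $\Sigma_\infty$, which guarantees a nontrivial limit.

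The remaining task, and the main obstacle, is to verify that $\Sigma_\infty$ has the structure of an embedded minimal lamination rather than merely a smooth minimal set that might self-accumulate transversally. Here I would use the embeddedness of each $\Sigma_k$ together with the multi-valued graph decomposition $\Sigma_k \setminus \{x_3\text{-axis}\} = \Sigma_{1,k} \cup \Sigma_{2,k}$ of part (C). Because on each $K_n$ the normals $\mathbf{n}_{\Sigma_k}$ stay uniformly away from horizontal (as can be read from $V_k > 0$ together with the uniform curvature estimate), locally each $\Sigma_k$ is a union of graphs over horizontal planes with uniformly bounded gradient. Any two such local graphs in the limit either coincide or are disjoint (otherwise the maximum principle together with embeddedness in the approximating sequence would force a contradiction), which is exactly the defining property of a lamination. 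This gives the desired lamination of $C \setminus M$, establishing the lemma; the identification of the leaves as the graphs $\Sigma_I$ promised in Theorem \ref{main_theorem}(D), and the description of their spiraling behavior, will follow from a separate analysis of the horizontal slices $\{x_3 = t\}$ for $t$ in a component $I$ of the complement of $M$.
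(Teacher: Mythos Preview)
Your argument is correct, but it takes a different route from the paper's. The paper works entirely in the parameter domain: on any compact $K$ in the interior of $\Omega_\infty$ the derivatives $H_k'$ are uniformly bounded, so Montel's theorem produces a subsequence of the $H_k$ converging locally uniformly to a holomorphic limit $H_\infty$; the Weierstrass integrals $F_k$ then converge smoothly to the Weierstrass immersion associated to $H_\infty$, and the lamination is simply the image of this limiting map. No curvature estimate, elliptic regularity, or maximum-principle argument is needed, because convergence of the holomorphic data already forces smooth convergence of the surfaces and embeddedness is inherited from Lemma~\ref{main_lemma} in the limit.

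Your approach instead invokes the general compactness theory for embedded minimal surfaces with locally bounded $|A|$: uniform graphical representation, Schauder/elliptic bootstrapping, diagonal extraction, and a maximum-principle argument to rule out transverse accumulation of leaves. This is heavier machinery but has the advantage of being model-independent; it would apply to any sequence satisfying (B) and (C), not just one coming from explicit Weierstrass data. The paper's route, by contrast, exploits the explicit construction to get convergence essentially for free, and as a bonus hands you the limit surface already parametrized by a single holomorphic map on each component of $\Omega_\infty$, which streamlines the subsequent analysis of the spiraling in~(D).
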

\begin{proof}
Let $K$ be a compact subset of the interior of $\Omega_\infty$. Then for $z \in K$, we have that $\text{sup}_k \, |\frac{d}{dz} H_k(z)|  < \infty$. Montel's theorem then gives a subsequence converging smoothly to a holomorphic function on $K$. By continuity of integration this gives that the embeddings $F_k: K \rightarrow \mathbb{R}^3$ converge smoothly to a limiting embedding. Thus the surfaces $\Sigma_k$ converge to a limit lamination of $C \setminus M$ that is smooth away from the $M$.
\end{proof}

Let $I = (t_1, t_2) \subset \mathbb{R}$ be an interval of the complement of the $M$ in $\mathbb{R}$ and consider $\Omega_I  = \Omega_\infty \cap \{ \text{Re} \, z \in I\}$. Then $\Omega_I$ is topologically a disk, and by Lemma \ref{convergence}, the surfaces $ \Sigma_{k,I} \equiv F_k(\Omega_I) $ are contained in $\{ t_1 < x_3 < t_2\} \subset \mathbb{R}^3$ and converge to an embedded minimal disk $\Sigma_{I}$. Now, Theorem \ref{main_theorem}(C) (which we have already established), gives that  $\Sigma_I$ consists of two multi-valued graphs $\Sigma^1_ I$, $\Sigma^2_I$ away from the $x_3-\text{axis}$.  We will show that each graph $\Sigma^j_I$ is $\infty-\text{valued}$ and spirals into the $\{x_3 = t_1\}$ and $\{x_3 = t_2 \}$ planes as claimed.

Note that by (\ref{d_yF}) and Theorem \ref{main_theorem} (C), the level sets $\{ x_3 = t\} \cap \Sigma^j_I$  for $t_1 < t < t_2$ are graphs over lines in the direction
\begin{equation}
\text{lim}_{k \rightarrow \infty} \left( \text{sin}\, U_k (t, 0), - \text{cos} \, U_k (t , 0), 0  \right)
\end{equation}
First, suppose $t_1 \in m_l$ for some $l$. Then we get that, for any $k > l$ and any $t < \frac{t_2 - t_1}{2}$.
\begin{equation}
U_k(t_1 + 2t, 0) - U_k(t_1 + t, 0)  = \int_{t_1 + t}^{t_1 + 2t} \partial_s U_k(s, 0)ds > c_2 \mu^{-l} \int_{t_1 + t}^{t_1 + 2t} \frac{ds}{((s - t_1)^2 + a^2_k)^2}
\end{equation}
(by the Cauchy-Reimann equations $U_{k,x} = V_{k, y}$). Then, since $a_k \rightarrow 0$ as $k \rightarrow \infty$, we get that
\begin{equation}
\text{lim}_{k \rightarrow \infty} U_k(t_1 + 2t, 0) - U_k(t_1 + t,0) > c_2 \mu^{-l} \int_{t_1 + t}^{t_1 + 2t} \frac{ds}{(s - t_1)^4} > \frac{c_2 \mu^{-l} }{64 t^3}
\end{equation}
and hence $\{ t_1 + t < |x_3| < t_1 + 2t \}$ contains an embedded $N_t \text{-valued graph}$, where
\begin{equation}
 N_t>\frac{c \mu^{-l} }{t^3}.
\end{equation}
Note that $N_t \rightarrow \infty$ as $t \rightarrow 0$ from above and hence $\Sigma_I$ spirals into the plane $\{ x_3 = t_1\}$. 

Now, suppose that $t_1 \notin M_\infty$. Then consider the sequence of points $p_l(t_1) \in m_l$ and recall that $t_1 - p_l(t_1) < \gamma^{-l}$. Then set $t^l = t_1 + \gamma^{-l}$ and consider the intervals
\begin{equation}
I_l = [t^{l + 1}, t^{l}].
\end{equation}
 Note that for $l$ large $I_l\subset I$. Then, for $k > l$ and $s \in I_l$ we may estimate
\begin{equation}
\partial_s U_k(s,0) > \frac{c_2 \mu^{-l}}{((s - p_{l}(t_1))^2 + a^2_k)^2} > \frac{c_2 \mu^{-l}}{(4\gamma^{-2l} + a_k^2)^2}
\end{equation}
since $s - p_{l} < 2 \gamma^{-l}$. We then get
\begin{equation}
U_k(t^j, 0) - U_k(t^{j +1}, 0) > |I_j| \frac{c_2 \mu^{-l}}{(4\gamma^{-2l} + a^2_k)^2} \geq \frac{c_2 \mu^{-l}(1 - \gamma^{-1}) \gamma^{-l}}{(4 \gamma^{-2l}+ a_k^2)^2}
\end{equation}
Taking limits, we get
\begin{equation}
\text{lim}_{k \rightarrow \infty} U_k(t^l, 0) - U_k(t^{l +1}, 0) >  \frac{c_2(1 - \gamma^{-1})}{16} \left( \frac{\gamma^3}{\mu}\right)^{l}
\end{equation}
Thus we see that $\{ t^{l + 1} < x_3 < t^{l} \} \cap \Sigma_I^l$ contains an embedded $N_l\text{-valued}$ graph, where
\begin{equation} \label{N_l}
N_l \approx  c \left( \frac{\gamma^3}{\mu}\right)^{l}
\end{equation}
This again shows that $\Sigma_I$ spirals into the plane $\{ x_3 = t_1 \}$ since as $j \rightarrow \infty$, $t^l \rightarrow t_1$, and $N_l \rightarrow \infty$.
Now for $t$ in the interior of $M$, every singly graphical component of $F_{l}$  contained in the slab $\{t - \gamma^{-l}< x_3 < t + \gamma^{-l} \}$ (by (\ref{N_l}) there are many) is graphical over $\{ x_3 = 0\} \cap B_{r_{l}}(0)$ where  Lemma \ref{good_interval} gives $r_{l} \rightarrow \infty$, which implies each component converges to  the plane $\{ x_3 = t\}$. This proves Theorem \ref{main_theorem} (D).
\section{Appendix}
Here we provide  the computations that were omitted from section \ref{preliminary_results}

\begin{proof} [proof of lemma \ref{holomorphic_on_domain}] 
It suffices to show that $h = u + iv$ is holomorphic on $\omega$. Recall that 
\begin{equation} 
h(z) = \int_0^{z} \frac{dz}{(z^2 + a^2)^2}
\end{equation} 
It is clear  that the points $\pm i a$ lie outside of $\omega$. Moreover $\omega$ is obviously simply connected, so that $ \int_0^{z} \frac{dz}{(z^2 + a^2)^2}$ gives a well-defined  holomorphic function on $\omega$.
\end{proof}

\begin{proof}[proof of lemma \ref{uv_bounds}]
We compute the real and imaginary components of $(z^2 + a^2)^2$:
\begin{equation}\notag
z^2 + a^2 = x^2 - y^2 + a^2 + 2ixy,
\end{equation}
\begin{equation} \notag
\left( z^2 + a^2 \right)^2 = \left( x^2 - y^2 + a^2 \right)^2 - 4x^2y^2 + 4ixy \left( x^2 -y^2 + a^2 \right).
\end{equation}

Set
\begin{equation}\notag
d = Re\left\{ \left( z^2 + a^2 \right)^2\right\} = \left( x^2 - y^2  + a^2 \right)^2 - 4x^2y^2,
\end{equation}
\begin{equation} \notag
b = Im \left\{ \left( z^2 + a^2\right)^2\right\}  = 4xy \left( x^2 - y^2 + a^2 \right),
\end{equation}
and
\begin{equation} \notag
c^2 = \left| \left( z^2 + a^2 \right)^2 \right|^2 = d^2 + b^2 =  \left\{ \left( x^2 - y^2  + a^2 \right)^2 - 4x^2y^2 \right\}^2 + 16x^2y^2 \left( x^2 - y^2 + a^2\right)^2.
\end{equation}
Now on $\omega$ (that is, on the set where $|y| \leq y_0(x)$), we get the bounds
\begin{equation} \notag
d \geq (1 - \epsilon^2)^2 \left( x^2 + a^2 \right)^2 - 4 \epsilon^2 (x^2 + a^2 )^2 = \left\{ (1 - \epsilon^2)^2 - 4\epsilon^2\right\} (x^2 + a^2)^2,
\end{equation}
\begin{equation} \notag
d \leq (x^2 + a^2)^2,
\end{equation}
\begin{equation} \notag
b \leq 4 \epsilon (x^2 +a^2)^{11/4} \leq 4 \epsilon (x^2 + a^2)^2
\end{equation}
since by assumption $|x|, a < \frac{1}{2}$. Using that $c^2 = d^2 + b^2$,
\begin{equation} \notag
\left\{ (1 - \epsilon^2)^2 - 4\epsilon^2 \right\}^2 (x^2 + a^2)^4  \leq c^2 \leq \left\{ 1 + 16\epsilon^2 \right\} \left( x^2 + a^2 \right)^4.
\end{equation}
Recalling that

\begin{equation} \notag
\frac{\partial}{\partial y}u(x,y) = Im \left\{ \frac{1}{\left( z^2 + a^2 \right)^2} \right\} = \frac{-b}{c^2}, \frac{\partial}{\partial y} v(x,y) = Re \left\{ \frac{1}{\left( z^2 + a^2 \right)^2} \right\} = \frac{d}{c^2}
\end{equation}
We get

\begin{equation} \notag
\left| \frac{\partial}{\partial y} u_p(x,y) \right| \leq \frac{4}{\left\{ (1 - \epsilon^2)^2 - 4\epsilon^2 \right\}^2} \frac{|x - p||y|}{\left( (x- p)^2 + a^2\right)^3}
\end{equation}
and

\begin{equation} \notag
\frac{\partial}{\partial y} v_p(x,y) \geq \frac{\left\{ (1 - \epsilon^2)^2 - 4\epsilon^2 \right\}}{1 + 16 \epsilon^2} \frac{1}{\left( (x- p)^2 + a^2\right)^2}
\end{equation}

If we restrict $\epsilon < \epsilon_0$ for $\epsilon_0$ sufficiently small we get that 
\begin{equation} \notag
\frac{4}{\left\{ (1 - \epsilon^2)^2 - 4\epsilon^2 \right\}^2} < c_1,
\end{equation}  
and

\begin{equation} \notag
\frac{\left\{ (1 - \epsilon^2)^2 - 4 \epsilon^2 \right\}}{1 + 16 \epsilon^2} > c_2.
\end{equation}
for constants $c_1$ and $c_2$. 
which immediately gives the lemma.
\end{proof}

\begin{proof}[proof of corollary \ref{U_k_V_k_bounds}]
Recalling definitions (\ref{h_l}) and (\ref{H_k}), we get

\begin{eqnarray}
\left| U_k(x, Y_k(x)) - U_k(x, 0) \right| & \leq & \sum_{l = 0}^k \mu^{-l} \int_0^{Y_k(x)} \left| \frac{\partial}{\partial y} u_l(x, y)  \right| \\ \notag
& \leq & \sum_{l = 0}^k \mu^{-l} \sum_{p \in m_l} \int_0^{Y_k(x)} \left| \frac{\partial}{\partial y} u_p(x, y)  \right| \\ \notag
& \leq & \sum_{l = 0}^k \mu^{-l} \sum_{p \in m_l} \int_0^{y_p(x)} \left| \frac{\partial}{\partial y} u_p(x, y)  \right| \\ \notag
& \leq & c_1 \epsilon^2  \sum_{l = 0}^k \mu^{-l} (\gamma^l + 1)
\end{eqnarray}

and

\begin {eqnarray}
min_{[Y_k(x)/2, Y_k(x)]} V_k(x,y) & \geq & \sum_{l = 0}^k  \int_0^{Y_k(x)/2}\frac{\partial}{\partial y} v_l(x, y) \\ \notag
& \geq & \sum_{l = 0}^k \mu^{-l} \sum_{p \in m_l} \int_0^{Y_k(x)/2}\frac{\partial}{\partial y} v_p(x,y) \\ \notag
& \geq & \frac{\epsilon c_2}{2} \sum_{l = 0}^k \mu^{-l} \sum_{p \in m_l} \frac{Y_k(x)}{y_p(x)} ((x - p)^2 + a^2)^{-3/4} \\ \notag
& = & \epsilon \frac{ c_2}{2} q_k (x)
\end{eqnarray}
\end{proof}

\end{document}